\newtheorem{theorem}{Theorem}
\newtheorem*{theorem*}{Theorem}
\newtheorem{proposition}[theorem]{Proposition}
\newtheorem*{corollary*}{Corollary}
\newtheorem*{remark*}{Remark}
\theoremstyle{remark}
\newtheorem{definition}[theorem]{Definition}
\theoremstyle{remark}
\theoremstyle{remark}
\newtheorem{remark}[theorem]{Remark}
\theoremstyle{remark}
\newcommand{\co}{\colon\,}
\newcommand{\cA}{{\mathcal A}}
\newcommand{\cD}{{\mathcal D}}
\newcommand{\cF}{{\mathcal F}}
\newcommand{\cG}{{\mathcal G}}
\newcommand{\cJ}{{\mathcal J}}
\newcommand{\cL}{{\mathcal L}}
\newcommand{\cO}{{\mathcal O}}
\newcommand{\cP}{{\mathcal P}}
\newcommand{\cQ}{{\mathcal Q}}
\newcommand{\bbC}{\mathbb{C}}
\newcommand{\bbP}{\mathbb{P}}
\newcommand{\bbK}{\mathbb{K}}
\newcommand{\bbR}{\mathbb{R}}
\newcommand{\bbZ}{\mathbb{Z}}
\newcommand{\Gm}{\mathbb{G}_m}
\newcommand{\bfL}{\mathbf{L}}
\newcommand{\bfR}{\mathbf{R}}
\newcommand{\bF}{\bar{F}}
\newcommand{\bJ}{\bar{J}}
\newcommand{\Br}{\operatorname{Br}} 
\newcommand{\et}{_{\text{\textup{\'et}}}} 
\newcommand{\Gal}{\operatorname{Gal}} 
\newcommand{\GF}{\Gamma_F} 
\newcommand{\Pic}{\operatorname{\textup{\textbf{Pic}}}} 
\newcommand{\Spec}{\operatorname{Spec}} 
\newcommand{\Hom}{\mathrm{Hom}} 
\newcommand{\lp}{\textup{(}}
\newcommand{\rp}{\textup{)}}
\newcommand{\dert}{\stackrel{\bfL}{\otimes}}
\newcommand{\internalcomment}[1]{}
\DeclareMathOperator{\Ext}{Ext}
\newcommand{\proj}{\operatorname{proj}} 
\title[Derived categories]{Derived categories of curves of genus one\\
and torsors over abelian varieties}
\author{Niranjan Ramachandran and Jonathan Rosenberg}
\address{Niranjan Ramachandran, Department of Mathematics, University of Maryland, College Park, MD 20742 USA.}
\email{atma@math.umd.edu}
\urladdr{http://www2.math.umd.edu/$\sim$atma/}
\address{Jonathan Rosenberg, Department of Mathematics, University of Maryland, College Park, MD 20742 USA.}
\email{jmr@math.umd.edu}
\urladdr{http://www2.math.umd.edu/$\sim$jmr/}
\thanks{JR was partially supported by NSF grant DMS-1607162 at the beginning
  of this project.}
\subjclass[2010]{Primary 14F08; Secondary 14H52 18G80 14F22 81T30 81T35}
\keywords{torsor, Brauer group, curve of genus $1$,
twisted sheaves, derived equivalence, abelian varieties, Fourier-Mukai}
\begin{document}
\begin{abstract}
  Suppose $C$ is a smooth projective curve of genus $1$ over a perfect
  field $F$, and $E$ is its Jacobian. In the case that $C$ has no
  $F$-rational points, so that 
  $C$ and $E$ are not isomorphic, $C$ is an $E$-torsor with
  a class $\delta(C)\in H^1(\Gal(\bar F/F), E(\bar F))$. Then
  $\delta(C)$ determines a class $\beta \in \Br(E)/\Br(F)$
  and there is a Fourier-Mukai equivalence of derived categories
  of (twisted) coherent sheaves
  $\cD(C) \xrightarrow{\cong} \cD(E, \beta^{-1})$.  We generalize this to higher dimensions, namely, we prove it also for torsors over abelian varieties.  
 \end{abstract}
\maketitle

\vskip-\baselineskip
\vskip-\baselineskip
\vskip-\baselineskip

\section*{Introduction}  
There has been great interest in generalizing Mukai duality  from
abelian varieties \cite{MR607081}
to other contexts such as K3 surfaces and Calabi-Yau
varieties. (See for example \cite{doo2,MR2310257}.)
C{\u a}ld{\u a}raru \cite{ac-thesis, ac1} has proved a version of
Mukai duality for certain elliptic fibrations $X \to S$ without a
section; this states that the derived category of coherent sheaves on
$X$ is equivalent to the derived category of $\alpha$-twisted coherent
sheaves on the relative Jacobian $J\to S$ of $X$ (or
on some suitable desingularization $\bJ$ thereof). Here $\alpha$ is a
class in the Brauer group $\Br(J)$ determined by the map $X\to S$, and the
equivalence is a type of Fourier-Mukai transform.  

C{\u a}ld{\u a}raru's result is that $X$ and $J$ are twisted derived
equivalent. Recall that two varieties $V$ and $W$ over $F$ are \emph{derived
equivalent} if there is a $F$-linear triangulated equivalence between
$\cD(V)$ and $\cD(W)$, the bounded derived categories of coherent sheaves.
They are \emph{twisted derived equivalent} if there is a
$F$-linear triangulated equivalence between $\cD(V, \alpha)$ and
$\cD(W, \beta)$ of twisted derived categories, for suitable twists
$\alpha$ and $\beta$.

While C{\u a}ld{\u a}raru, following \cite{MR1242006}, was interested in
elliptic fibrations
defined over $\bbC$, we consider elliptic fibrations $C \to \Spec F$
without a section over Spec of a perfect field $F$; these correspond to
curves $C$ of genus one over $F$ without a rational point. Since $C$ is a
torsor over its Jacobian (an elliptic curve), one is led to consider the
general situation of a torsor $X$ over an abelian variety $A$ over any
perfect field $F$. The main question that we answer in this paper can be
phrased as follows:
{\it Is there a twisted derived equivalence between $X$ and $A$?
  If yes, can we describe the twisting data explicitly?} 

Since the case of torsors over elliptic curves is more accessible,
we start with it. This situation is
in fact closely analogous to the one studied by C{\u a}ld{\u a}raru,
since a genus-one curve $C$ over $F$ without rational points is a torsor
over its
Jacobian (an elliptic curve $E$), and so $C \to \Spec F$ can be viewed
as a elliptic fibration without a section.  
In this case, we provide an answer using an explicit description of the
Poincar\'e bundle on $E \times E$. This method does not work for general
abelian varieties, as such an explicit description is not available in
higher dimensions.

\subsection*{Brief description of our results} Let $A$ be an abelian
variety over a perfect field $F$ and let $X$ be a torsor over $A$. The
aim of this article is to provide a description (Theorems \ref{main},
\ref{main2}) of the bounded derived 
category $\cD(X)$ of coherent sheaves on $X$ in terms of twisted
coherent sheaves on $A$; in particular, we identify the twisting (an
element in a specific subquotient of the Brauer group of $A$) in terms
of the class of the torsor $X$. In the case of  a smooth projective
curve of genus 
one (torsor over an elliptic curve),  it will turn out that we solve
at the same time an equivalent 
problem: the description of the bounded derived category of
$\alpha$-twisted coherent sheaves on an elliptic curve, for
$\alpha$ a class in the relative Brauer group (of the elliptic
curve relative to $S=\Spec F$).  The proof is based on the Picard
stack of $X$ (viewed as a $\Gm$-gerbe on the dual abelian variety
$A^t$). One major ingredient is the Fourier-Mukai transform between
$A$ and $A^t$, slightly generalized to handle torsors; as suggested by
the referee, one needs an extension of certain results of Bondal-Orlov
to positive characteristic (see \S \ref{sec:equiv}). The other
technical input of the proof is the identification of the class of the
above-mentioned $\Gm$-gerbe in terms of the class of the torsor
$X$. This uses a specific map in a Leray spectral sequence and a clear
understanding of the close link between universal line bundles and
sections of a $\Gm$-gerbe. One way to understand our result is that
the Galois action on the closed points of $X$, under our derived
equivalence, is transformed to a gerbe on $A^t$.

\begin{remark*}
Our main result is that any torsor $X$ over an abelian variety $A$ is
twisted derived equivalent to $A^t$. 
By Balmer \cite{Balmer}, one can recover $X$ from the tensor triangulated
category $\cD(X)$. As derived equivalence disregards the tensor structure
and the category $\cD(A^t, \beta)$ does not have a tensor structure
for non-trivial $\beta$, our result does not recover the tensor structure
on $\cD(X)$. On the other hand, there is a functor
$\cD(A^t) \times  \cD(A^t, \beta) \to \cD(A^t, \beta)$
which, by our main result, becomes an action
$\cD(A^t) \times \cD(X) \to \cD(X)$ of the tensor triangulated category
$\cD(A^t)$ on the triangulated category $\cD(X)$. We do not know what is
the significance, if any, of this extra structure on $\cD(X)$. \qed
\end{remark*}

Our results are somewhat similar to, but distinct from,
other results on related problems in
\cite{MR2309993,MR3581296,MR4251609,MR2399730}.

After this paper was written, we became aware of the preprint
\cite{SankarTaylor}, which has some overlap with our results.
However, the point of view in Theorem \ref{main} here is somewhat
different, as we start with a curve $C$ without rational points
and construct a Fourier-Mukai partner $(E, \beta^{-1})$ for it, where
$E$ is the Jacobian of $C$ and $\beta$ is a gerbe determined by the
class of $C$ as a torsor over $E$.  By way of contrast, in
\cite{SankarTaylor} the focus is on trying to construct
Fourier-Mukai partners for $\Gm$-gerbes over curves of genus $1$,
and there is no discussion of generalizations to torsors over
abelian varieties of higher dimension.

At the end of this paper, we begin to explore parallels between our
main theorems and the structure of $\cD(C)$ for $C$ a projective curve
of genus $\ne 1$.  Here things are very different, since a variety
with ample canonical bundle or ample anticanonical bundle cannot have
any nontrivial Fourier-Mukai partners, by a theorem of Bondal and Orlov
\cite{MR1818984}. Nevertheless, in genus $0$, we show
in Theorem \ref{thm:genuszero} that $\cD(C)$
has a semi-orthogonal decomposition into $\cD(F)$ and
$\cD(F,\alpha)$, for some class $\alpha\in \Br(F)$.
We also briefly discuss the situation in genus $>1$, which has a very
different flavor.

\subsection{Notations} We work with (smooth) separated
schemes of finite type over a
perfect field $F$. For any scheme $Y$, we write
$\mathcal D(Y)$ for the bounded derived category of coherent sheaves
on $Y$. For any class $\alpha$ in the (cohomological) Brauer group
$\Br(Y)$, we write $\cD(Y, \alpha)$ for the bounded derived category of
$\alpha$-twisted coherent sheaves on $Y$; if $A$ is an Azumaya algebra
over $Y$ with class $\alpha \in \Br(Y)$, then as explained in
\cite[Theorem 1.3.7]{ac-thesis} or
\cite[4.3]{ac1}, one can view $\cD(Y,\alpha)$ (up to natural equivalence)
as the bounded derived category of coherent sheaves on $A$.
Alternatively, one can fix a \v Cech cocycle representing
$\alpha$ in \'etale cohomology and consider presheaves that
satisfy gluing up to this cocycle --- see \cite[\S 1]{ac-thesis}
and \cite[Lemma 2.2]{MR2571702} for an exposition.
If $\mathcal F_i$ is an $\alpha_i$-twisted coherent sheaf on $Y$ for
$\alpha_i \in \Br(Y)$, then $\mathcal F_1 \otimes \mathcal F_2$
is a $\alpha_1\cdot\alpha_2$-twisted coherent sheaf on $Y$. 

We fix an algebraic closure $\bar{F}$ of $F$ and write $\GF$ for
the Galois group $\Gal(\bar{F}/F)$. We often abuse terminology
(as many authors do) by
identifying \'etale cohomology of $\Spec F$ with the Galois cohomology,
so that $H^i\et(\Spec F, \Gm) = H^i(\GF, \bF^\times)$.
For instance, $\Br(F) = H^2(\GF, \bF^\times)$.  

\subsection{Set-up}
\label{sec:setup}
Let $C$ be a smooth geometrically connected projective curve of genus
$1$ over $F$. We are mostly interested in the case where $C$
\emph{does not} have an $F$-rational point.
(Recall that the name \emph{elliptic curve} is reserved for the
case where $C$ has an $F$-rational point and we have fixed
such a point as an origin.) 

The Jacobian $E$ of $C$ is an elliptic curve. As $C$ is a torsor for
$E$ (since they agree over $\bF$), we write $\delta(C)$ for its
class in $H^1(\GF, E(\bF))$.

The identity section
$e\co  S\to E$, $S=\Spec F$, of $E$ gives a splitting:
\[\Br(E) = \Br^0(E)\oplus \Br(F).\]
We call $\Br^0(E)$ the \emph{relative Brauer group} of $E$ (or more
exactly of $E$ rel $\Spec F$).
We will use the folklore result
(an indication of the proof of which will be given in \S \ref{sec:fund}):
\begin{theorem}[see {\cite[p.\ 122]{sl}}, \cite{as},
    {\cite[\S3.5]{AlRam}}, and {\cite[Remark 4.5.2]{MR4304038}}] 
\label{thm:Skor}
There is a natural  isomorphism 
\[\Theta\co 
\Br^0(E)  = \frac{\Br(E)}{\Br(F)} \xrightarrow{\cong} H^1(\GF,E(\bar{F})).\]
\end{theorem}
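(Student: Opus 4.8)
The plan is to compute $\Br(E) = H^2\et(E,\Gm)$ by means of the Hochschild--Serre spectral sequence (equivalently, the Leray spectral sequence for the structure map $\pi\co E \to \Spec F$), and to read off the graded piece that produces $H^1(\GF, E(\bF))$. First I would write down
\[E_2^{p,q} = H^p(\GF, H^q\et(\bar E, \Gm)) \Longrightarrow H^{p+q}\et(E,\Gm),\]
where $\bar E = E\times_F \bF$, and identify the three relevant rows: $H^0\et(\bar E,\Gm) = \bF^\times$ (global units of a proper, geometrically connected, reduced curve are constants), $H^1\et(\bar E,\Gm) = \Picf(\bar E)$, and $H^2\et(\bar E,\Gm) = \Br(\bar E) = 0$ by Tsen's theorem. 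Thus the $q=2$ row vanishes, so the only contributions to $H^2\et(E,\Gm)$ are $E_\infty^{2,0}$, a subquotient of $H^2(\GF,\bF^\times) = \Br(F)$, and $E_\infty^{1,1}$, a subquotient of $H^1(\GF,\Picf(\bar E))$.

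The next step is to identify $H^1(\GF, \Picf(\bar E))$ with $H^1(\GF, E(\bF))$. I would invoke the $\GF$-equivariant degree sequence
\[0 \to E(\bF) \to \Picf(\bar E) \to \bbZ \to 0,\]
where $\Picf^0(\bar E) = E(\bF)$ via the Abel--Jacobi map normalized at the origin $e$. Because $e$ is an $F$-rational point of degree $1$, the class $[e]$ splits this sequence $\GF$-equivariantly, giving $\Picf(\bar E)\cong E(\bF)\oplus\bbZ$ as $\GF$-modules. Since $H^1(\GF,\bbZ) = \Hom_{\mathrm{cont}}(\GF,\bbZ) = 0$, this yields $H^1(\GF,\Picf(\bar E)) = H^1(\GF, E(\bF))$.

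To pin down the filtration and eliminate the spectral-sequence subquotients, I would exploit the section $e\co \Spec F\to E$. Since $e^*\circ\pi^* = \id$, the edge maps $\pi^*\co H^n(\GF,\bF^\times)=E_2^{n,0}\to H^n\et(E,\Gm)$ are split injective; this forces every differential landing in the bottom row $q=0$ to vanish, so $E_\infty^{2,0} = E_2^{2,0} = \Br(F)$ and $E_\infty^{1,1} = E_2^{1,1} = H^1(\GF,E(\bF))$. The spectral sequence then collapses to a short exact sequence
\[0 \to \Br(F) \xrightarrow{\pi^*} \Br(E) \to H^1(\GF, E(\bF)) \to 0,\]
split by $e^*$. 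Hence $\Br^0(E) = \ker(e^*) = \Br(E)/\pi^*\Br(F) \cong H^1(\GF,E(\bF))$, which is the asserted isomorphism $\Theta$; naturality is inherited from that of the spectral sequence together with functoriality of the Abel--Jacobi splitting.

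The place that needs the most care is the control of the spectral sequence beyond the identification of the $E_2$-terms: one must check that the surviving contributions to $H^2$ are exactly the two named pieces and that the resulting extension splits compatibly. This is precisely where the $F$-rational point $e$ is indispensable --- for a genus-one curve $C$ without a rational point the same argument breaks down, which is consistent with the theorem being stated for the Jacobian $E$ rather than for $C$ itself. A secondary point to verify carefully is the vanishing $H^2\et(\bar E,\Gm)=0$; over a non-closed perfect field it is applied to the base change $\bar E$, where Tsen's theorem for curves over an algebraically closed field supplies it.
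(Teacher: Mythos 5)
Your proposal is correct and follows essentially the same route as the paper's proof: the Leray/Hochschild--Serre spectral sequence for $\pi\co E\to \Spec F$, Tsen's theorem to kill $E_2^{0,2}$, the rational point $e$ splitting the edge maps so that all differentials into the bottom row (in particular $d_2^{1,1}\co E_2^{1,1}\to E_2^{3,0}$) vanish, and $H^1(\GF,\bbZ)=0$ to pass from $\Picf(\bar E)$ to $E(\bF)$. The only cosmetic difference is that you identify $\Picf^0(\bar E)$ with $E(\bF)$ directly via the Abel--Jacobi map, whereas the paper phrases this through $E^t(\bar F)$ and the self-duality $E\cong E^t$.
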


\subsection{Main Results}Our aim is a description of $\cD(C)$ in terms of $E$ and $\delta(C)$;
we also study the analogous question for torsors over abelian varieties.
\begin{theorem}[\textbf{First main Theorem}]
\label{main} 
Let $C$ be a smooth geometrically connected projective curve of genus $1$
with Jacobian $E$, over
a perfect field $F$, and let $\delta(C)$ be the class
of $C$ in $H^1(\GF, E(\bF))$. Let $\beta \in \Br^0(E)$ be the unique element with $\Theta(\beta) =\delta(C)$. 

 There is a Fourier-Mukai equivalence 
\[ \Phi\co \cD(C) \xrightarrow{\cong} \cD(E, \beta^{-1}).\]
\end{theorem}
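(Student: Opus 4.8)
\emph{Overall strategy.} The plan is to exhibit $\Phi$ as a twisted Fourier--Mukai transform integrating against a universal Poincar\'e sheaf on $C\times E$, and then to control it by faithfully flat descent from $\bF$. First I would recall the classical picture over the algebraic closure. Because $E=\Picf^0(C)$ is the Jacobian, its $\bF$-points parametrise degree-$0$ line bundles on $C$, so the transform should integrate against a universal line bundle on $C\times E$. Over $\bF$ the torsor $C$ acquires a rational point, giving $C_{\bF}\cong E_{\bF}$ and $\Picf^0(C)_{\bF}\cong E_{\bF}$; transporting Mukai's Poincar\'e bundle on $E\times E^t\cong E\times E$ produces a line bundle $\cP_{\bF}$ on $C_{\bF}\times E_{\bF}$ whose integral transform $\bfR\mathrm{pr}_{E,*}\bigl(\bfL\mathrm{pr}_C^*(-)\dert\cP_{\bF}\bigr)$ is the classical Fourier--Mukai equivalence $\cD(C_{\bF})\xrightarrow{\cong}\cD(E_{\bF})$.

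\emph{Descent of the kernel and the equivalence.} Next I would descend this kernel to $F$. Here the hypothesis that $C$ has no $F$-rational point is exactly what prevents $\cP_{\bF}$ from descending to an honest line bundle on $C\times E$: the obstruction is a class in $\Br(E)$, pulled back along $\mathrm{pr}_E$, which I write as $\beta^{-1}$ for a suitable $\beta\in\Br(E)$. What does descend is a $\mathrm{pr}_E^*\beta^{-1}$-twisted line bundle $\cP$, and integrating against it gives a functor $\Phi=\bfR\mathrm{pr}_{E,*}\bigl(\bfL\mathrm{pr}_C^*(-)\dert\cP\bigr)\co\cD(C)\to\cD(E,\beta^{-1})$, the target twist being inherited from that of the kernel. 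That $\Phi$ is an equivalence I would check after the faithfully flat base change to $\bF$, where it recovers the equivalence of the previous step; this reduction requires the positive-characteristic extension of the Bondal--Orlov/Mukai generation and orthogonality criterion recorded in \S\ref{sec:equiv}, applied in the twisted setting.

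\emph{Identifying the twist.} The hard part will be the identification of the twist, namely showing $\Theta(\beta)=\delta(C)$ --- equivalently, that the descent obstruction $\beta^{-1}$ corresponds under the $\Theta$ of Theorem~\ref{thm:Skor} to $\delta(C)^{-1}$. I would extract $\beta$ as an edge term in the Hochschild--Serre spectral sequence
\[
H^p\bigl(\GF,\,H^q\et(E_{\bF},\Gm)\bigr)\Longrightarrow H^{p+q}\et(E,\Gm)
\]
computing $\Br(E)$; since $H^1(\GF,\bbZ)=0$, the relevant edge group is $H^1\bigl(\GF,\Picf(E_{\bF})\bigr)=H^1\bigl(\GF,E^t(\bF)\bigr)\cong H^1(\GF,E(\bF))$, which is precisely where $\delta(C)$ lives and from which $\Theta$ originates. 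The substance is then a comparison of two Galois descent data: the $1$-cocycle presenting $C$ as an $E$-torsor, and the $1$-cocycle measuring the failure of $\cP_{\bF}$ to descend. I would make this explicit by computing how the translation action on $C_{\bF}$ by a point of $E(\bF)$ twists $\cP_{\bF}$, rewriting translates of the Poincar\'e bundle via the theorem of the square, and matching the resulting cocycle --- compatibly with the edge map defining $\Theta$ --- to that of $\delta(C)$. This cocycle-level identification, performed so as to respect the normalisation built into $\Theta$, is the delicate step; everything else is classical Fourier--Mukai theory together with formal descent of the twisted kernel.
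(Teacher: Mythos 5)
Your proposal is correct and takes essentially the same route as the paper: construct the kernel as a twisted universal sheaf (the paper packages your ``descent obstruction'' as the class of the Picard gerbe $\cP\to E$), deduce the equivalence from Mukai's theorem over $\bF$ via the Descent Theorem \ref{thm:descent}, and identify the twist by comparing the cocycle of Galois translates of the Poincar\'e-type bundle with the torsor cocycle through the Leray/Hochschild--Serre edge map defining $\Theta$. Your ``theorem of the square'' computation of how translation twists the kernel is exactly the paper's computation in \S\ref{firstproof} of $\sigma^*M_x\otimes M_x^{-1}=\proj_{\bar E}^*L_\sigma$ with class $x\ominus\sigma^{-1}x$, so the two arguments coincide up to language.
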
 
\begin{remark} Theorem \ref{main} for the case $F =\bbR$
  was previously proved by the second
  author \cite[Theorem 5]{jmr} and by D.\ Kussin
  \cite[Theorem 12.4]{MR3597149}, working independently,
  using different methods.  That case has a connection to physics
  discussed in \cite{jmr};  $\cD(E, \beta)$ and $\cD(C)$ 
  come from categories of D-branes on two different,
  but dual, orientifold string theories, $\beta=\beta^{-1}$ encodes the
  $O$-plane charges in one of the theories, and the equivalence is a
  mathematical formulation of the physics duality.
  The connection with physics (though not in the case of
  nonalgebraically closed fields) is also discussed in \cite{MR2399730}
  and in \cite{MR1637405}.
\end{remark}

Theorem \ref{main} admits a generalization to abelian varieties. 
Let $A$ be an abelian variety defined over a perfect field 
$F$, let $A^t$ be the dual abelian variety,
and let $Y$ be a torsor for $A$ defined over $F$ with class
$\delta(Y)\in H^1\et(S, A)$, where $S=\Spec F$. Let $\Br_1(A^t)$
denote the kernel of the map
\[\Br(A^t) \to \Br(\overline{A^t}).\]
The map $\Br(F) \to \Br_1(A^t)$ is split by the identity section of
$A^t$; one obtains 
\[ \Br_1(A^t) = \Br_1^0(A^t) \oplus \Br(F), \quad \Br_1^0(A^t)
\xrightarrow{\sim} \frac{\Br_1(A^t)}{\Br(F)}.\]
The generalization of Theorem \ref{thm:Skor} to abelian varieties
is given by the following 
\begin{proposition} 
\label{prop:BrtoH1} There is a
natural isomorphism
\[ \Theta\co \Br_1^0(A^t) \xrightarrow{\sim} H^1\et(S, A).\]
\end{proposition}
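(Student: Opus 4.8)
The plan is to construct $\Theta$ via the Leray spectral sequence for the structure morphism $\pi\co A^t \to S$ and the \'etale sheaf $\Gm$, exactly paralleling the classical argument underlying Theorem \ref{thm:Skor} for elliptic curves. First I would write down the low-degree exact sequence associated to the filtration on $H^2\et(A^t, \Gm)$. Since $A^t$ has a rational point (the identity section $e\co S \to A^t$), the edge maps are split, and the relevant piece of the spectral sequence produces an exact sequence involving $H^1\et(S, R^1\pi_*\Gm)$, the kernel of a differential into $H^2\et(S,\Gm) = \Br(F)$, and the transgression. The key observation is that the dual abelian variety represents the relative Picard functor, so the sheaf $R^1\pi_*\Gm$ on $S\et$ is precisely (the sheafification giving) $A^t(\bar F)$ as a $\GF$-module, whence $H^1\et(S, R^1\pi_*\Gm) \cong H^1(\GF, A^t(\bar F))$. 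This already identifies one of the three relevant terms.

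The heart of the argument is then to pin down the correct subquotient of $\Br(A^t)$. I would show that the filtration step $F^1 H^2\et(A^t,\Gm)/F^2$ injecting into $H^1\et(S, R^1\pi_*\Gm)$ lands inside $\Br_1(A^t)$ after splitting off $\Br(F)$ via $e^*$; concretely, the definition $\Br_1(A^t) = \ker\bigl(\Br(A^t) \to \Br(\overline{A^t})\bigr)$ is exactly the geometrically trivial part, which is what the Leray filtration's $F^1$ term (modulo the $F^2 = \pi^*\Br(F)$ piece) computes. Combined with the splitting $\Br_1(A^t) = \Br_1^0(A^t) \oplus \Br(F)$ recorded in the excerpt, this yields
\[
\Br_1^0(A^t) \xrightarrow{\sim} \ker\bigl(H^1(\GF, A^t(\bar F)) \to \text{(next differential)}\bigr).
\]
The final step is to replace $H^1(\GF, A^t(\bar F))$ with $H^1\et(S, A)$. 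Here I would invoke the fact that $A$ and $A^t$ have canonically isomorphic Galois modules of $\bar F$-points up to the duality pairing — more precisely, $H^1\et(S, A) = H^1(\GF, A(\bar F))$ and the self-duality (or the functoriality of $R^1\pi_*\Gm$ for the \emph{original} abelian variety $A$) identifies the torsor group for $A$ with the relevant $H^1$ for $A^t$. For $S = \Spec F$ with $F$ perfect, $H^2\et(S, R^1\pi_*\Gm)$ and the higher differentials vanish enough that the kernel above is all of $H^1$, giving the isomorphism onto $H^1\et(S,A)$.

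The main obstacle I anticipate is the careful bookkeeping of which differential in the Leray spectral sequence cuts out the correct subgroup, and verifying that the target $\Br_1(A^t)$ (defined by geometric triviality) coincides with the Leray-filtration description rather than with the full second filtration step. One must check that the differential $d_2\co H^1\et(S, R^1\pi_*\Gm) \to H^3\et(S,\Gm)$ and the potentially nonzero $d_2\co H^0\et(S, R^2\pi_*\Gm) \to H^2\et(S, R^1\pi_*\Gm)$ behave correctly; over a perfect field, cohomological-dimension considerations and the existence of the section $e$ should force these to vanish or split, but this requires a short argument rather than an outright appeal. Establishing naturality of $\Theta$ in $A$ is then routine, following from functoriality of the spectral sequence.
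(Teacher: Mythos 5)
Your overall strategy --- the Leray spectral sequence for $\pi\co A^t\to S$ with $\Gm$-coefficients, the identification of the Leray filtration step $F^1$ (modulo $\pi^*\Br(F)$) with $\Br_1(A^t)$ so that the possibly nonvanishing $E_2^{0,2}$ is harmless, and the use of the identity section to kill the differentials into the bottom row --- is exactly the paper's. One small caution on the differentials: your alternative appeal to ``cohomological-dimension considerations'' over a perfect field is unavailable, since $H^3(\GF,\bF^\times)$ need not vanish (e.g.\ $F=\bbR$); the splitting via $e^*$ is the argument that works, and you do also mention it. The genuine gap is in the computation of $E_2^{1,1}=H^1\et(S,R^1\pi_*\Gm)$. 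The sheaf $R^1\pi_*\Gm$ is the full relative Picard sheaf $\Pic_{A^t/S}$, not the sheaf of points of an abelian variety: its geometric fibre is $\mathrm{Pic}(\overline{A^t})$, an extension of the N\'eron--Severi group $NS(\overline{A^t})$ by $\mathrm{Pic}^0(\overline{A^t})$. Your proposal ignores the N\'eron--Severi contribution entirely; the paper disposes of it by invoking that $NS$ of an abelian variety is torsion-free, whence $H^1(\GF,NS(\overline{A^t}))=0$ and only the $\mathrm{Pic}^0$ part survives in $H^1$. (In the genus-one case this step is the innocuous $H^1(\GF,\bbZ)=0$; in higher dimension it is a real input that cannot be skipped.)

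Second, and more seriously, you identify the relevant Galois module as $A^t(\bF)$ and then propose to pass to $A$ via ``self-duality,'' asserting that $A(\bF)$ and $A^t(\bF)$ are canonically isomorphic Galois modules. This is false for abelian varieties of dimension $>1$: $A$ and $A^t$ are in general only isogenous, not isomorphic, and a polarization need not be principal nor defined over $F$. Self-duality $E\cong E^t$ is precisely the feature of the elliptic-curve proof of Theorem \ref{thm:Skor} that does \emph{not} generalize --- which is the whole point of needing a separate argument here. What the paper uses instead is biduality: $\Pic^0_{A^t/S}=(A^t)^t$ is canonically $A$, so the Galois module appearing in $E_2^{1,1}$ is $A(\bF)$ from the start, and no comparison between $A$ and $A^t$ is ever required. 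With these two corrections (handling $NS$, and replacing self-duality by biduality) your outline becomes the paper's proof.
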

The above result uses the identity
$H^1\et(S, \Pic_{A^{t}/S}) = H^1\et(S, A)$,
which is a consequence of the fact that the N\'eron-Severi group of
$A$ is torsion-free.

Here then is the second main result of this paper:
\begin{theorem}[\textbf{Second main theorem}]\label{main2} Let
$\beta \in \Br_1^0(A^t)$ be the unique element with
$\Theta(\beta) = \delta(Y)$. 
One has a Fourier-Mukai equivalence of derived categories
\[\Phi\co \cD(Y) \xrightarrow{\cong} \cD(A^t, \beta^{-1}).\]
\end{theorem}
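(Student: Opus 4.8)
The plan is to reduce Theorem \ref{main2} to the classical Fourier-Mukai equivalence for abelian varieties, twisted by the gerbe determined by the torsor $Y$. The core idea is that the Picard stack of $Y$ over $S = \Spec F$ realizes $A^t$ as carrying a natural $\Gm$-gerbe whose class is precisely $\beta^{-1}$. First I would recall the Fourier-Mukai transform associated to the Poincar\'e bundle $\cP$ on $A \times A^t$; by the Mukai--Orlov theorem (and its extension to positive characteristic discussed in \S\ref{sec:equiv}), the integral functor $\cD(A) \xrightarrow{\cong} \cD(A^t)$ with kernel $\cP$ is an equivalence. The issue is that $Y$ is only a torsor for $A$, not $A$ itself, so there is no canonical Poincar\'e bundle on $Y \times A^t$ and no honest line-bundle kernel; the correct object lives on $A^t$ only as a \emph{twisted} sheaf.

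The key geometric step is to identify the gerbe. I would work with the Picard stack $\Pic(Y/S)$, whose coarse space is $\Pic_{Y/S}$. Because $Y$ is a torsor, $\Pic_{Y/S}^0$ is canonically $A^t$, but the morphism from the stack to its coarse space is a $\Gm$-gerbe $\cG \to A^t$ rather than a trivial one, and the obstruction to a global section (\ie to a universal/Poincar\'e line bundle on $Y \times A^t$) is exactly the class of this gerbe in $\Br(A^t)$. The heart of the argument is the computation that this class equals $\beta^{-1}$, where $\Theta(\beta) = \delta(Y)$. I would extract this via the Leray spectral sequence for $Y \times A^t \to A^t$ (or for the structure map to $S$), tracking $\delta(Y) \in H^1\et(S, A)$ through the differential $d_2\co H^0(A^t, R^1\pi_* \Gm) \to H^2(A^t, \pi_*\Gm)$; the universal line bundle over $\bar F$ descends only up to this cocycle, and the resulting class in $\Br_1^0(A^t)$ matches $\beta^{-1}$ under Proposition \ref{prop:BrtoH1}. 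This is the step I expect to be the main obstacle, since it requires a careful and explicit understanding of how the Galois descent data for the Poincar\'e bundle on $\overline{Y} \times \overline{A^t}$ produces the gerbe, and of the compatibility between the spectral-sequence differential and the isomorphism $\Theta$.

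With the gerbe identified, the equivalence follows formally. Over $\bar F$, $Y$ becomes isomorphic to $A$ and the Poincar\'e bundle gives an equivalence $\cD(\overline{Y}) \xrightarrow{\cong} \cD(\overline{A^t})$. The descent data on both sides differ precisely by the $\Gm$-gerbe $\cG$: the Galois action that twists $\cD(\overline{Y})$ into $\cD(Y)$ is transported, under the Fourier-Mukai kernel, to the twisting of $\cD(\overline{A^t})$ by the class $\beta^{-1}$, producing $\cD(A^t, \beta^{-1})$. Concretely, one views the twisted Poincar\'e object as a $\beta^{-1}$-twisted sheaf on $Y \times A^t$ (equivalently a sheaf on the gerbe $\cG$), and the integral transform with this kernel gives the desired $F$-linear equivalence $\Phi\co \cD(Y) \xrightarrow{\cong} \cD(A^t, \beta^{-1})$. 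To verify that $\Phi$ is an equivalence rather than merely a functor, I would either check that it induces the classical equivalence after base change to $\bar F$ (and invoke faithfully flat descent of the equivalence property), or directly exhibit the inverse kernel coming from the transpose Poincar\'e object twisted by $\beta$.
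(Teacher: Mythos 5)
Your overall architecture matches the paper's: realize the Picard stack $\cP_Y$ of $Y$ as a $\Gm$-gerbe over $A^t$ whose class is the obstruction to a Poincar\'e bundle on $Y\times_S A^t$, obtain the equivalence over $\bar F$ from Mukai's theorem, and descend to $F$ (the paper packages the descent step as Theorem \ref{thm:descent}, which is precisely your ``check after base change to $\bar F$ / exhibit the inverse kernel'' alternative). The genuine gap is at the step you yourself flag as ``the main obstacle'': the identification of the gerbe class with $\delta(Y)$ under the isomorphism $\Theta$ of Proposition \ref{prop:BrtoH1}. In your write-up this identification is only asserted --- ``tracking $\delta(Y)$ through the differential $d_2$'' and ``the compatibility between the spectral-sequence differential and $\Theta$'' --- but no argument is given, and this compatibility is not a formal consequence of anything set up earlier; it is the actual content of the theorem, and the part the paper spends its proof on. The paper proves it by a concrete computation: fix $x\in Y(\bar F)$, pull back the Poincar\'e bundle $L$ on $\bar{A}\times_{\bar S}\bar{A}^t$ along $\rho_x\co (z,h)\mapsto (z\ominus x,\,h)$ to get a section $M_x$ of the gerbe over $\bar F$, observe that $\sigma^*M_x=M_{\sigma^{-1}x}$ for $\sigma\in\GF$, and compute that $\sigma^*M_x\otimes M_x^{-1}$ is the pullback of a line bundle $L_\sigma$ on $\bar{A}^t$ whose class in $A(\bar F)=\Pic^0_{A^t/S}(\bar F)$ is given by the torsor cocycle $\sigma\mapsto\sigma(z)\ominus z$ of \eqref{Y-cocycle}. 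Without some such explicit cocycle computation (or the ``integration of a gerbe along the fibres'' argument of \S\ref{secondproof}, which in the end also reduces to it), your proof is incomplete exactly where the difficulty lies.

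A secondary issue is bookkeeping of the inverse. You claim the class of the gerbe $\cP_Y\to A^t$ is $\beta^{-1}$; in the paper's conventions the computation gives $\Theta([\cP_Y])=\delta(Y)$, i.e.\ the gerbe class is $\beta$ itself, and the inverse in $\cD(A^t,\beta^{-1})$ arises not from inverting the gerbe class but from the direction of the twisted kernel: the universal sheaf descends to a $\proj_{A^t}^*\beta$-twisted object, so tensoring the pullback of a $\beta^{-1}$-twisted sheaf with it yields an untwisted object that can be pushed forward (see the footnote in \S\ref{twistedfm}). Sign conventions here are notoriously slippery, so this may be reconcilable, but in a complete write-up you would have to make your convention for ``sheaf on a gerbe of class $g$ equals $g$-twisted sheaf'' consistent with the cocycle conventions entering $\Theta$ and $\delta(Y)$; as stated, your account places the inverse in a different spot than the computation supports.
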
 

\subsection*{Acknowledgements} We are grateful to discussions with J. Adams, B. Antieau, E. Aldrovandi, P. Brosnan, A. Gholampour, R. Joshua, S. Lichtenbaum, E. Mackall, M. Rapoport, S. Shankar and L. Taylor.  We would like thank a referee for pointing out that in the original version of this paper, we forgot to explain how to make things
work in characteristic $p$.

\section{Fundamentals}
\label{sec:fund}

Since we'll need it later, we review the proof of Theorem
\ref{thm:Skor}.  See also \cite[\S 2]{sl}. 
This theorem was presumably folklore for a long time;
for example, it appears without proof in \cite[equation (1)]{MR1869390}
and in \cite[equation (1)]{MR1857024}.  It's also contained
in \cite[Theorem 2.2]{as}, together with the vanishing
theorem for the elementary obstruction in the presence of an
$F$-rational point.
\begin{proof}[Proof of Theorem \ref{thm:Skor}]
As in Section \ref{sec:setup}, we let
$C$ be a smooth connected projective curve of genus $1$ over $F$,
and let $E$ be its Jacobian. The
canonical morphism $\pi\co E\to \Spec F$ induces
a Leray spectral sequence
\begin{equation}
  E_2^{p,q}=H^p\et(\Spec F, R^q\pi_*\Gm) \Rightarrow H^{p+q}\et(E, \Gm).
  \label{eq:Leray}
\end{equation}
(See for example \cite[equation (0.1)]{as} and
\cite[Theorem 12.7]{mlec}.) As usual, we think of the groups
$E_2^{p,q}$ as sitting in the first quadrant of the plane, with $p$
as the horizontal coordinate and $q$ as the vertical coordinate.
We will be interested in the terms of low degree.  
Along the bottom row, we have the
Galois cohomology of $F$, $H^p(\Spec F, \Gm)$.  For $p=1$,
this vanishes by ``Hilbert 90'' \cite[Corollary 11.6]{mlec},
and for $p=2$, this is the Brauer
group $\Br (F)$.  For $p=3$, the group $H^3(\Spec F, \Gm)$ is
somewhat mysterious.  However, in any event, since $E$ has
an $F$-rational point, given by a morphism
$e\co \Spec F\to E$ (corresponding to the identity element
$e$ for the group structure on $E$), the edge homomorphisms
$E_\infty^{p,0}\to H^p\et(E, \Gm)$ are split via $e^*$,
and so all differentials landing in the bottom ($q=0$) row
have to vanish.  Along the left-hand column, we have the groups
$H^0\et(\Spec F, R^q\pi_*\Gm) = H^q(E(\bar F),\Gm)^{\GF}$.
For $q=2$, this vanishes by Tsen's Theorem \cite[Theorem 1.2.12]{MR4304038},
\cite[Theorem 13.7]{mlec}.
So as a consequence of the spectral sequence
and vanishing of $E_2^{1,1} \xrightarrow{d_2^{1,1}} E_2^{3,0}$, we have 
an edge homomorphism
\[ \Theta\co \Br^0(E)  = \frac{\Br(E)}{\Br(F)}
\xrightarrow{\cong}  H^1(\GF, \Pic_{E/\Spec F}(\bar F)).\]
Since $\Pic_{E/\Spec F}(\bar F)$ splits
as $E^t(\bar F) \times \bbZ$ and $H^1({\GF}, \bbZ)=0$
since $\GF$ is profinite, we can replace
$H^1(\GF, \Pic_{E/\Spec F}(\bar F))$ here with
$H^1(\GF, E^t(\bar F)) = H^1(\GF, E(\bar F))$ using the
self-duality $E \cong E^t$ of $E$.  
This explains the map $\Theta$ of Section \ref{sec:setup}.
\end{proof}

\section{Equivalence Criteria}
\label{sec:equiv}

In this section we discuss criteria for telling when a Fourier-Mukai-type
functor gives an equivalence of twisted derived categories.  This is
a variant of the criteria found in \cite{BondalOrlovSemiOrthog},
\cite{MR1420564},
\cite{MR1651025}, and \cite{ac-thesis} (among others), but as a referee
pointed out to us, some methods work only in characteristic $0$, so
we explain here how to get around this assumption.  The additional
complication that comes up in characteristic $p$ is explained in
\cite[Remark 1.25]{MR2323539}.  We don't claim any great originality for
the results in this section, as they are just minor reworkings of things
in the literature, but we have found it convenient to write them down
here so that we can refer to them in the proofs of Theorems
\ref{main} and \ref{main2}.

We will use a slight modification of \cite{MR1651025}, the modification
made to allow for twisted sheaves.  Recall from
\cite[Definition 2.1]{MR1651025} that a \emph{spanning class} for a
triangulated category $\cA$ is a collection $\Omega$ of objects in $\cA$
such that for any object $a$ of $\cA$, $a=0$ if and only if
$\Hom^i_\cA(a,\omega)=0$ for all $i$ and all
$\omega\in\Omega$, and if and only if $\Hom^i_\cA(\omega,a)=0$
for all $i$ and all $\omega\in\Omega$.  As in
\cite{BondalOrlovSemiOrthog} and in \cite{MR1651025}, $\cO_x$ denotes
the skyscraper sheaf at a closed point $x$ of a scheme $X$.  The one
thing that is new in the twisted case is that when we are
dealing with $\alpha$-twisted sheaves, the endomorphism ring of the
stalk of this $\alpha$-twisted skyscraper sheaf at $x$
can be a non-trivial central simple algebra over the residue field at
$x$.  See \cite[Remark 3.3]{MR3114930} for further discussion.

\begin{proposition}[{\cite[Example 2.2]{MR1651025}}]
\label{prop:spanning}
Let $X$ be a smooth projective variety {\lp}a smooth separated projective
scheme of finite type{\rp} over a perfect field $F$, and let
$\alpha\in \Br(X)$.  Then $\{\cO_x:x\in X\}$
{\lp}here we range over the closed points of $X${\rp}
is a spanning class for $\cD(X,\alpha)$.
\end{proposition}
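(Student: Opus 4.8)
The plan is to verify the two defining conditions of a spanning class directly, reducing each to a statement about the cohomology sheaves of a complex. Recall that $\cD(X,\alpha)$ is the bounded derived category of the abelian category of $\alpha$-twisted coherent sheaves, so it carries the standard $t$-structure whose heart is that abelian category; consequently an object $a$ vanishes if and only if all of its cohomology sheaves $\cH^j(a)$ vanish. I would first fix the meaning of $\cO_x$: working \'etale-locally where $\alpha$ is represented by an Azumaya algebra $\cA$, twisted coherent sheaves are $\cA$-modules, and $\cO_x$ is the simple twisted skyscraper at $x$, i.e.\ the simple module over the central simple algebra $D=\cA\otimes k(x)$ over the residue field $k(x)$. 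The two conditions to check are that $\Hom^i(a,\cO_x)=0$ for all $i$ and all closed $x$ forces $a=0$, and symmetrically for $\Hom^i(\cO_x,a)$.

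For the first condition, suppose $a\neq 0$ and let $m$ be the largest integer with $\cH^m(a)\neq 0$. Applying $\Hom^\bullet(-,\cO_x)$ to the truncation triangle
\[\tau_{\le m-1}a \too a \too \cH^m(a)[-m] \too (\tau_{\le m-1}a)[1]\]
and chasing degrees in the resulting long exact sequence, the contributions of $\tau_{\le m-1}a$ vanish in total degree $-m$ (since its cohomology lives in degrees $\le m-1$), giving an isomorphism $\Hom^{-m}(a,\cO_x)\cong\Hom(\cH^m(a),\cO_x)$. It therefore suffices to produce, for the nonzero twisted sheaf $\cH^m(a)$, a closed point $x$ with $\Hom(\cH^m(a),\cO_x)\neq 0$. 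Choosing $x$ in $\operatorname{Supp}\cH^m(a)$ and passing to the Azumaya description, the fiber $\cH^m(a)\otimes k(x)$ is a nonzero finite-dimensional $D$-module, hence surjects onto the simple $D$-module $\cO_x$; composing with the Nakayama surjection $\cH^m(a)\twoheadrightarrow \cH^m(a)\otimes k(x)$ yields a nonzero map, a contradiction.

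For the second condition I would invoke Serre duality for the twisted derived category of the smooth projective $X$: $\cD(X,\alpha)$ admits a Serre functor, given as in the untwisted case by $-\otimes\omega_X[\dim X]$. Since $\omega_X$ is a line bundle, $\cO_x\otimes\omega_X\cong\cO_x$, so $\Hom^i(\cO_x,a)\cong\Hom^{\dim X - i}(a,\cO_x)^\vee$, and the vanishing of all $\Hom^i(\cO_x,a)$ is equivalent to the vanishing of all $\Hom^j(a,\cO_x)$; the previous paragraph then gives $a=0$. The step I expect to require the most care --- and the one on which the characteristic-$p$ and twisted subtleties really bear --- is the production of a nonzero map to $\cO_x$: I must use that $\cO_x$ is the \emph{simple} twisted skyscraper with endomorphism ring the central simple algebra $D$ (rather than a line over $k(x)$), and check that the ``surjection onto the fiber'' argument is insensitive both to $D$ being a nontrivial division algebra and to $k(x)/F$ being inseparable. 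Once this and the twisted Serre duality are in place, no further hypothesis on the characteristic is needed.
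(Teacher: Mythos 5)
Your proposal is correct and takes essentially the same route as the paper's proof: both isolate the top nonvanishing cohomology sheaf of $a$, produce a nonzero map from it to a twisted skyscraper at a closed point of its support, deduce $\Hom^{-m}(a,\cO_x)\ne 0$ (your truncation-triangle argument is just an unpacked form of the spectral sequence $\Ext^p_X(H^{-q}(a),\cO_x)\Rightarrow \Hom^{p+q}(a,\cO_x)$ that the paper invokes), and then obtain the second spanning condition from Serre duality with an $\alpha$-twist. Your explicit Azumaya-algebra/simple-module description of $\cO_x$ and the surjection onto the fiber merely fill in details that the paper delegates to the remark preceding the proposition and to its citations.
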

\begin{proof}
Bridgeland's proof in \cite[Example 2.2]{MR1651025} works with just a trivial
modification.  Since we are working with the \emph{bounded} derived category,
for any $a\ne 0$ in $\cD(X,\alpha)$, there is a maximal $q_0$ for which
$H^{q_0}(a)\ne 0$.  There must be some 
closed point $x$ in the support of $H^{q_0}(a)$ for which
$\Hom_X(H^{q_0}(a),\cO_x)\ne 0$, and then
$\Hom_\cA^{-q_0}(a,\cO_x)\ne 0$, because of the spectral sequence
$\Ext^p_X(H^{-q}(a),\cO_x)\Rightarrow \Hom_\cA^{p+q}(a,\cO_x)$.  Serre
duality (with an $\alpha$-twist) gives the result going the other way.
\end{proof}

Now we want to study when a Fourier-Mukai-type functor between categories
of the form $\cD(X,\alpha)$ is fully faithful, and when it is an equivalence.
In this regard, we can use the rest of sections 2, 3, and 4 of
\cite{MR1651025}, or alternatively \cite{MR1420564},
pretty much as is.  However, as pointed out in \cite[Remark 1.25]{MR2323539},
\cite[Theorem 1.1]{BondalOrlovSemiOrthog} and \cite[Theorem 5.1]{MR1651025}
fail in characteristic $p$, even over algebraically closed fields and
with no Brauer twist.  However, in our situation, the Fourier-Mukai functors
we study become ordinary Mukai duality \cite[Theorem 2.2]{MR607081}
after base change from $F$ to its algebraic closure, and thus become
equivalences over the algebraic closure.  So we really only need to
study the descent problem from $\bar F$ to $F$.

In situations where one just wants a fully faithful embedding, one
can also use the following adaptation of \cite[Theorem 2.3]{MR1651025}.
\begin{theorem}
\label{thm:equivcrit}
Let $X$ and $Y$ be smooth projective varieties {\lp}smooth separated
projective schemes of finite type{\rp} over a perfect field $F$.
Let $\cL$
be a $\proj_X^*\alpha^{-1}\otimes \proj_Y^*\beta$-twisted sheaf 
over $X\times_S Y$, $S=\Spec F$, and let $\Phi\co \cD(X,\alpha)\to \cD(Y,\beta)$
be the corresponding Fourier-Mukai functor
$\Phi(\cF)=\bfR\proj_{Y,*}(\cL\dert \proj_X^*(\cF))$.  Then
$\Phi$ is fully faithful if and only if the following
properties are satisfied:
\begin{enumerate}
\item For all closed points $x_1\ne x_2$ of $X$,
  $\Hom^i_{\cD(Y,\beta)}\bigl(\Phi(\cO_{x_1}),\Phi(\cO_{x_2})\bigr)=0$ for all $i$.
\item For all closed points $x$ of $X$, $\Phi$ induces isomorphisms
  for all $i$
  \[
  \Hom^i_{\cD(X,\alpha)}\bigl(\cO_x,\cO_x\bigr)\to
  \Hom^i_{\cD(Y,\beta)}\bigl(\Phi(\cO_x),\Phi(\cO_x)\bigr).
  \]
\end{enumerate}
\end{theorem}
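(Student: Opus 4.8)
The plan is to follow Bridgeland's proof of \cite[Theorem 2.3]{MR1651025} and to track where the twists $\alpha,\beta$ enter. Everything rests on three inputs: that $\Omega=\{\cO_x : x\in X \text{ closed}\}$ is a spanning class for $\cD(X,\alpha)$, which is Proposition \ref{prop:spanning}; that $\Phi$ admits both a left and a right adjoint; and the formal fact that an exact functor of triangulated categories admitting both adjoints, which induces isomorphisms $\Hom^i(\omega_1,\omega_2)\to\Hom^i(\Phi\omega_1,\Phi\omega_2)$ for all $\omega_1,\omega_2$ in a spanning class and all $i$, is fully faithful. Unlike the equivalence criteria discussed above, this full-faithfulness statement is characteristic-free, so no passage to $\bar F$ is needed here.

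The first step is to recognize that hypotheses (1) and (2) are exactly the hypothesis of that formal criterion for $\Omega$. For distinct closed points $x_1\ne x_2$ the supports of $\cO_{x_1}$ and $\cO_{x_2}$ are disjoint, so $\Hom^i_{\cD(X,\alpha)}(\cO_{x_1},\cO_{x_2})=0$ for every $i$; condition (1) therefore says that $\Phi$ carries this to the isomorphism $0\xrightarrow{\sim}0$. On the diagonal, condition (2) is the required isomorphism directly. Thus (1) and (2) together assert precisely that $\Phi$ is fully faithful on the spanning class $\Omega$. The reverse implication of the theorem is immediate, since a fully faithful functor induces isomorphisms on all $\Hom$-groups, in particular those in (1) and (2).

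The second step is to produce the adjoints in the twisted setting. Writing $\cL$ for the $\proj_X^*\alpha^{-1}\otimes\proj_Y^*\beta$-twisted kernel, the candidate right adjoint is the twisted Fourier--Mukai functor with kernel $\cL^\vee\dert\proj_Y^*\omega_Y[\dim Y]$; bookkeeping of twists shows $\cL^\vee$ is $\proj_X^*\alpha\otimes\proj_Y^*\beta^{-1}$-twisted, so this functor indeed sends $\cD(Y,\beta)$ to $\cD(X,\alpha)$, and the left adjoint is produced symmetrically with $\omega_X$. Their adjunction properties follow from relative Grothendieck--Serre duality exactly as in the untwisted case: the twists are pulled back along the two projections, the relative dualizing complex is untwisted, and smoothness and projectivity of $X$ and $Y$ guarantee that $\cL$ is perfect and that the relevant pushforwards preserve bounded coherent twisted complexes. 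With the adjoints and the spanning class in hand, Bridgeland's formal argument --- which uses the unit $\eta\co\id\to G\Phi$ of the right adjunction together with the spanning property to show $\eta$ is an isomorphism, the presence of both adjoints being what propagates the conclusion from $\Omega$ to all of $\cD(X,\alpha)$ --- then yields full faithfulness.

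The step I expect to be the main obstacle is the verification that this machinery is genuinely insensitive to the one new feature of the twisted, non-algebraically-closed situation: the endomorphism algebra $\Hom^0_{\cD(X,\alpha)}(\cO_x,\cO_x)$ is now a (possibly non-split) central simple algebra over the residue field $\kappa(x)$ rather than $\kappa(x)$ itself, as flagged in \cite[Remark 3.3]{MR3114930}. I would check that condition (2) is correctly read as an isomorphism of graded $\kappa(x)$-algebras rather than merely of vector spaces, and that neither the spanning-class characterization of the zero object (which is a pure vanishing statement) nor the duality identities used for the adjoints see this noncommutativity. Since the argument of Proposition \ref{prop:spanning} and relative duality rely only on such vanishing and on dimension counts that are unchanged by replacing $\kappa(x)$ with a central simple algebra over it, I expect the adaptation to go through, but this is the point that requires real care.
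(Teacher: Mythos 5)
Your proposal is correct and takes essentially the same route as the paper's own proof, which likewise combines Proposition \ref{prop:spanning} (twisted skyscrapers form a spanning class), the existence of left and right adjoints for $\Phi$ via Verdier duality, and the formal criterion of Bridgeland \cite[Theorem 2.3]{MR1651025}; your extra observations (that conditions (1)--(2) encode the spanning-class hypothesis because $\Hom^i(\cO_{x_1},\cO_{x_2})=0$ for $x_1\ne x_2$, and that this criterion is characteristic-free) are exactly the points the paper leaves implicit. One small slip: your adjoint kernels are labeled backwards relative to the standard convention (the kernel with $\omega_Y[\dim Y]$ gives the \emph{left} adjoint and the one with $\omega_X[\dim X]$ the \emph{right}), but since you construct both, the argument is unaffected.
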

\begin{proof}
  The functor $\Phi$ has left and right adjoints by Verdier
  duality (\cite[\S3]{MR1420564} or \cite[Lemma 4.5]{MR1651025}), and
  by its basic definition (as a composite of triangulated functors)
  is triangulated.  Thus we can apply
  \cite[Theorem 2.3]{MR1651025} and Proposition \ref{prop:spanning}.
\end{proof}
\begin{theorem}
\label{thm:equivcrit1}
Let $X$ and $Y$ be smooth non-empty geometrically connected
projective varieties {\lp}smooth separated
projective schemes of finite type{\rp} over $S=\Spec F$,
 $F$ a perfect field. Let $\cL$
be a $\proj_X^*\alpha^{-1}\otimes \proj_Y^*\beta$-twisted sheaf 
over $X\times_S Y$ and let $\Phi\co \cD(X,\alpha)\to \cD(Y,\beta)$
be the corresponding Fourier-Mukai functor
$\Phi(\cF)=\bfR\proj_{Y,*}(\cL\dert \proj_X^*(\cF))$.  Assume that
$\Phi$ and its left and right adjoints all satisfy the condition of
\textup{Theorem \ref{thm:equivcrit}}. Then $\Phi$ is an
equivalence of categories.
\end{theorem}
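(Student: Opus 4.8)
The plan is to derive the equivalence from full faithfulness of $\Phi$ together with full faithfulness of its right adjoint, using the standard categorical fact that an adjunction in which \emph{both} functors are fully faithful is an adjoint equivalence. Write $\Phi^R,\Phi^L\co \cD(Y,\beta)\to \cD(X,\alpha)$ for the right and left adjoints of $\Phi$, which exist by Verdier duality as recalled in the proof of Theorem \ref{thm:equivcrit}.

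First I would apply Theorem \ref{thm:equivcrit} to $\Phi$ itself: since $\Phi$ satisfies conditions (1) and (2), it is fully faithful, so the unit $\eta\co \id_{\cD(X,\alpha)}\to \Phi^R\Phi$ of the adjunction $\Phi\dashv\Phi^R$ is an isomorphism. Next I would note that $\Phi^R$ is again a Fourier-Mukai functor of the form handled by Theorem \ref{thm:equivcrit}: by (twisted) Verdier/Serre duality its kernel is obtained from $\cL$ by dualizing and tensoring with a shift of the relative dualizing sheaf, so in particular $\Phi^R$ is triangulated and possesses both adjoints. Theorem \ref{thm:equivcrit} therefore applies to $\Phi^R$ with the roles of $X$ and $Y$ interchanged, and since by hypothesis $\Phi^R$ satisfies conditions (1) and (2), it too is fully faithful; equivalently the counit $\epsilon\co \Phi\Phi^R\to \id_{\cD(Y,\beta)}$ is an isomorphism.

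With both the unit $\eta$ and the counit $\epsilon$ isomorphisms, $\Phi$ and $\Phi^R$ are mutually quasi-inverse, so $\Phi$ is an equivalence of categories. (Running the same argument with the adjunction $\Phi^L\dashv\Phi$ and the full faithfulness of $\Phi^L$ identifies $\Phi^L$ with the inverse as well, which is why both adjoints are assumed to satisfy the conditions.) The main obstacle is not this categorical endgame but the verification that $\Phi^R$ and $\Phi^L$ really are Fourier-Mukai functors of the precise shape demanded by Theorem \ref{thm:equivcrit}, so that the theorem may legitimately be invoked for them and not merely for $\Phi$; this requires care in tracking how the Brauer twists $\alpha$, $\beta$ and the relative dualizing sheaf enter the adjoint kernels, and it is here that the twisted-sheaf and positive-characteristic bookkeeping (already absorbed into Proposition \ref{prop:spanning}) must be handled correctly.
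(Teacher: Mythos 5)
Your proof is correct, and it reaches the conclusion by a genuinely different route than the paper. The paper's proof cites Bridgeland's Theorem 3.3 \cite{MR1651025}: after Theorem \ref{thm:equivcrit} gives full faithfulness of $\Phi$, that result upgrades full faithfulness to an equivalence provided the target $\cD(Y,\beta)$ is \emph{indecomposable} --- this is exactly where the non-emptiness and geometric connectedness of $Y$ enter, via the twisted modification of Bridgeland's Example 3.2 --- and provided $\Phi^R(b)\cong 0$ implies $\Phi^L(b)\cong 0$, which the hypothesis on the adjoints supplies. You bypass Theorem 3.3 and indecomposability entirely: full faithfulness of $\Phi$ (unit an isomorphism) plus full faithfulness of $\Phi^R$ (counit an isomorphism) forces an adjoint equivalence by pure category theory, so geometric connectedness plays no role in your argument, and the condition on $\Phi^L$ becomes redundant, as you note. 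What the two routes share is the non-formal input you correctly flag: to apply Theorem \ref{thm:equivcrit} to $\Phi^R$ one must know it is a Fourier--Mukai functor of the shape that theorem demands, with kernel of the correct twist. This is supplied by twisted Grothendieck--Verdier duality: the adjoint kernels are $\cL^\vee\otimes\proj_X^*\omega_X[\dim X]$ and $\cL^\vee\otimes\proj_Y^*\omega_Y[\dim Y]$, which are $\proj_X^*\alpha\otimes\proj_Y^*\beta^{-1}$-twisted complexes (complexes rather than sheaves, but the paper already works at this level of generality, as in its footnote and in Theorem \ref{thm:descent}), exactly as required for functors $\cD(Y,\beta)\to\cD(X,\alpha)$; note that the paper needs the same fact to convert its hypothesis on the adjoints into the condition required by Theorem 3.3, so this is not an extra cost of your approach. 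In sum: the paper's route uses only a weak consequence of the adjoint hypotheses but pays with the indecomposability argument, while yours uses the adjoint hypotheses at full strength and in exchange gets a shorter, more elementary endgame that needs fewer hypotheses on $Y$.
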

\begin{proof}
  This immediately follows from \cite[Theorem 3.3]{MR1651025}
  and Theorem \ref{thm:equivcrit}, along with the obvious ``twisted''
  modification of \cite[Example 3.2]{MR1651025} (using the
  assumption that $X$ and $Y$ are geometrically connected)
  to obtain that $\cD(Y,\beta)$ is indecomposable in the sense of
  \cite[Definition 3.1]{MR1651025}.
\end{proof}
The following was proved by Orlov \cite[Lemma 2.12]{doo1}
in the untwisted case.
\begin{theorem}[Descent Theorem]
\label{thm:descent}
Let $X$ and $Y$ be smooth non-empty geometrically connected
projective varieties {\lp}smooth separated
projective schemes of finite type{\rp} over a perfect field $F$.
Let $\cL$
be a $\proj_X^*\alpha^{-1}\otimes \proj_Y^*\beta$-twisted sheaf
{\lp}or complex of sheaves{\rp}
over $X\times_S Y$, $S=\Spec F$, and let $\Phi\co \cD(X,\alpha)\to \cD(Y,\beta)$
be the corresponding Fourier-Mukai functor
$\Phi(\cF)=\bfR\proj_{Y,*}(\cL\dert \proj_X^*(\cF))$.
Similarly assume
$\cL^\vee\in \cD(X\times Y, \proj_X^*\alpha\otimes \proj_Y^*\beta^{-1})$
and let $\Phi^\vee(\cG)=\bfR\proj_{X,*}(\cL^\vee\dert \proj_Y^*(\cG))$.
Then $\Phi$ and $\Phi^\vee$ are inverse equivalences if and only if
base change to the algebraic closure $\bar F$ gives inverse equivalences
$\bar\Phi\co \cD(\bar X,\bar\alpha)\to \cD(\bar Y,\bar\beta)$
and $\bar\Phi^\vee\co \cD(\bar Y,\bar\beta)\to \cD(\bar X,\bar\alpha)$.
\end{theorem}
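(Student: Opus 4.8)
The plan is to deduce the descent statement from faithfully flat descent, the engine being that base change along $\Spec\bar F\to S$ is conservative on bounded derived categories while every functor and natural transformation in sight commutes with this base change. The \emph{only if} direction is immediate: applying $(-)\times_S\bar S$ to an equivalence produces an equivalence, and $\bar\Phi$, $\bar\Phi^\vee$ are by construction the base changes of $\Phi$, $\Phi^\vee$, since the formation of $\bfR\proj_{Y,*}(\cL\dert\proj_X^*(-))$ commutes with flat base change along $\Spec\bar F\to S$ (by flat base change for the proper pushforward $\proj_Y$ together with the evident compatibilities of $\dert$ and $\proj_X^*$). So all the work is in the \emph{if} direction.

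First I would record the conservativity input. Since $\bar F$ is a faithfully flat $F$-algebra, the morphisms $\bar X\to X$ and $\bar Y\to Y$ are faithfully flat, so the pullback $p^*\co\cD(X,\alpha)\to\cD(\bar X,\bar\alpha)$ is exact and detects the zero object: exactness gives $H^i(p^*a)\cong p^*H^i(a)$, and a nonzero $\alpha$-twisted coherent sheaf stays nonzero after faithfully flat pullback, so $p^*a\cong 0$ forces $a\cong 0$. Passing to cones, $p^*$ is \textbf{conservative}: a morphism in $\cD(X,\alpha)$ is an isomorphism if and only if its image under $p^*$ is. The same holds on $Y$ and, crucially, on the self-products $X\times_S X$ and $Y\times_S Y$, where the composite kernels live.

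Next I would reduce ``inverse equivalences'' to the vanishing of cones of \emph{canonical} natural transformations. The composites $\Phi^\vee\Phi$ and $\Phi\Phi^\vee$ are again Fourier--Mukai functors, with kernels the convolutions of $\cL$ and $\cL^\vee$, and these convolutions --- being formed by pullback, $\dert$, and proper pushforward --- commute with base change. By Verdier duality $\Phi$ and $\Phi^\vee$ have left and right adjoints, as already invoked in the proof of Theorem \ref{thm:equivcrit}. Using the units and counits of these adjunctions --- together with the identification of $\Phi^\vee$ with the adjoint, up to the canonical dualizing twist and shift, via biduality and the evaluation map $\cL^\vee\dert\cL\to\cO$ --- one obtains canonical natural transformations comparing each of $\Phi^\vee\Phi$ and $\Phi\Phi^\vee$ with the identity functor. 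Being built functorially out of duality data, all of these commute with $p^*$, and $\Phi,\Phi^\vee$ are inverse equivalences exactly when they are isomorphisms. Over $\bar F$ they are isomorphisms by hypothesis; applying conservativity to their cones (which base change to the cones of the corresponding $\bar F$-transformations) shows they are isomorphisms over $F$, completing the argument.

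The step I expect to be the main obstacle is the construction in the previous paragraph: producing the canonical natural transformations (equivalently, identifying $\Phi^\vee$ with the honest adjoint up to the dualizing twist and shift) and checking that they commute with base change. The subtlety is that one must descend these \emph{canonical, hence Galois-equivariant} transformations; one cannot simply descend the bare isomorphism $\bar\cK\cong\cO_{\bar\Delta}$ of composite kernels that the $\bar F$-equivalence provides via Orlov's uniqueness of kernels, because an arbitrary isomorphism of objects over $\bar F$ need not be compatible with the descent data and so need not descend to $F$. Framing the whole argument through the functorial unit and counit is exactly what circumvents this, since those transformations are defined over $F$ to begin with and their cones can be tested after faithfully flat base change.
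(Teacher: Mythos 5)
Your reduction has a genuine gap at its central step, and, ironically, the step you reject as invalid is the one that actually carries the proof. The ``canonical natural transformations comparing $\Phi^\vee\Phi$ and $\Phi\Phi^\vee$ with the identity'' do not exist for the data of the theorem. Grothendieck--Verdier duality produces a canonical unit and counit only for $\Phi$ and its honest right adjoint $\Psi$, whose kernel is $\bfR\mathcal{H}om(\cL,\cO)\otimes\proj_X^*\omega_X[\dim X]$; it produces nothing canonical relating $\Phi$ to the given $\Phi^\vee$, because $\cL^\vee$ in the statement is simply a second, independently prescribed kernel with the opposite twist. It need not be the derived dual of $\cL$ (in the paper's application it is Mukai's inverse kernel, a \emph{shifted} dual), so there is no evaluation map $\cL^\vee\dert\cL\to\cO$ to use; and even when $\cL^\vee$ is the literal dual, the adjoint kernel differs from it by $\proj_X^*\omega_X[\dim X]$, so the unit compares $\Phi^\vee\Phi$ with the functor $-\otimes\omega_X^{-1}[-\dim X]$, not with the identity. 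Your phrase ``up to the canonical dualizing twist and shift'' conceals exactly the missing content: an isomorphism $\Phi^\vee\cong\Psi$ \emph{over $F$}. Over $\bar F$ such an isomorphism exists (both are inverses of $\bar\Phi$), but only non-canonically, and transporting it down to $F$ is precisely the descent-of-an-isomorphism problem you set out to circumvent; the argument is circular at this point. What your method does prove, cleanly, is that $\Phi$ and $\Phi^\vee$ are each equivalences: the unit and counit, viewed as morphisms of kernels $\cO_\Delta\to\cK_{\Psi\Phi}$ and $\cK_{\Phi\Psi}\to\cO_\Delta$, are defined over $F$, commute with flat base change, and become isomorphisms over $\bar F$, so conservativity applies. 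That suffices for the paper's applications, but it is weaker than the stated conclusion that $\Phi$ and $\Phi^\vee$ are \emph{mutually inverse}.

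Your objection to descending the bare isomorphism $\bar\cJ\cong\cO_{\bar\Delta}$ --- which is in fact the paper's proof, following Orlov's Lemma 2.12 --- conflates descending a \emph{morphism} with descending an \emph{isomorphism class}. No Galois-equivariance is needed: one only has to show that two objects of $\cD(X\times_S X)$ defined over $F$ which become isomorphic over $\bar F$ are already isomorphic over $F$, since \emph{any} isomorphism $\cJ\cong\cO_\Delta$ yields $\Phi^\vee\Phi\cong\Phi_{\cO_\Delta}=\mathrm{id}$. This Noether--Deuring-type statement is true here: an isomorphism $\bar\cJ\to\cO_{\bar\Delta}$ and its inverse involve finitely many constants and hence are defined over a finite extension $F'/F$ (Hom-spaces commute with flat base change, and a morphism whose cone vanishes after the faithfully flat pullback to $\bar F$ is an isomorphism); pushing forward along the finite flat morphism $(X\times_S X)_{F'}\to X\times_S X$ gives $\cJ^{\oplus d}\cong\cO_\Delta^{\oplus d}$ with $d=[F':F]$, and the Krull--Schmidt property of $\cD(X\times_S X)$ (a Hom-finite, idempotent-complete triangulated category over $F$) yields $\cJ\cong\cO_\Delta$. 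So the paper's ``ordinary Galois descent'' should be read as this argument (equivalently: the locus of isomorphisms inside the affine space of $\Hom(\cO_\Delta,\cJ)$ is open and defined over $F$, and is nonempty after base change); some such step is unavoidable for the ``mutually inverse'' clause, and your framework supplies no substitute for it.
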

\begin{proof}
The proof of Orlov \cite[Lemma 2.12]{doo1} applies with just trivial
modification. We review the argument for the harder direction
(that $\bar \Phi$ an equivalence implies that $\Phi$ is an
equivalence). A composition of Fourier-Mukai functors is a
Fourier-Mukai functor, and twists multiply,
so $\Phi^\vee\circ \Phi$ is represented by an
object $\cJ$ in $\cD(X\times_S X)$.  (Note that there is no twist here, since
it gives a functor from $\cD(X,\alpha)$ to itself.)  And
$\bar\Phi^\vee\circ \bar\Phi$ is represented by $\bar\cJ$, which must
be $\cO_{\bar\Delta}$, since $\bar\Phi^\vee\circ \bar\Phi$ is the
identity.  Thus $\cJ\cong \cO_{\Delta}$ by ordinary Galois descent,
and by symmetry one can do the same with $\Phi\circ \Phi^\vee$,
and so $\Phi$ and $\Phi^\vee$ are inverse equivalences.
\end{proof}

\section{Proof of the First main Theorem}
\label{sec:proof}
\begin{proof}[Proof of Theorem \ref{main}]
We proceed to the proof of Theorem \ref{main}.  Let $C$, $E$, $F$, and
$\bF$ be as before, and let $\delta(C)$ and $\Theta(\beta)=\delta(C)$. 
Our proof divides naturally into two parts. We first identify $\cD(C)$ as a twisted derived category $\cD(E, \beta')$ and then show that the twisting element $\beta' \in \Br^0(E)$ is the element $\beta$ corresponding to $\delta(C)$.  Namely, our task has two steps:  
\begin{enumerate}
\item Prove a twisted Fourier-Mukai equivalence $ \cD(C) \cong \cD(E, \beta')$ for some element $\beta' \in \Br^0(E)$; see \S \ref{twistedfm}.  
\item Prove that $\beta' = \beta$ or equivalently $\Theta(\beta') =\delta(C)$. 
\end{enumerate} 
We will prove (1) in \S \ref{twistedfm} and provide two proofs of (2), one in \S \ref{firstproof} and another in \S \ref{secondproof}.

\subsection{Twisted Fourier-Mukai functors}\label{twistedfm} 
If there were a universal (Poincar\'e) sheaf
$\cL$ on $C\times_S E$, we could define our desired equivalence by
the Mukai formula $\Phi(\cF)=\bfR\proj_{2,*}(\cL\dert \proj_1^*(\cF))$,
$\proj_1$ and $\proj_2$ the projections from $C\times_S E$ to $C$ and $E$
\cite{MR607081}. Now $\cL$ exists when $\pi\co C\to \Spec F$ has a section
(\cite[Exercise 4.3]{MR2223410}), but not otherwise
(combine \cite[Exercise 4.3 and Exercise 2.4]{MR2223410}).
So as explained in \cite[\S3.1]{MR3821178}
or in \cite[\S4]{MR3114930}, we need to consider instead
\begin{definition}\label{defn-of-p} The Picard stack 
$\cP$ of $C$ is the stack on $S=\Spec F$ whose objects
over $T$ (a scheme over $S$) are invertible sheaves on
$C\times_ST$ of degree $0$ over $C$.
\end{definition} 
Note that $\cP$ is a $\Gm$-gerbe over $E=\Pic^0 C$; see \S \ref{picstack} for details.

There is a universal sheaf $\cL$ on
$C\times_S \cP$ and there is an obstruction $\beta'\in H^2\et(E,\Gm)$
(the class of the gerbe $\cP\to E$)
to descending $\cL$ to a universal sheaf on $C\times_S E$.
In characteristic $0$, one could now apply
\cite[Theorem 3.2.1]{ac-thesis} and the proofs of
\cite[Theorem 5.1]{ac1} and \cite[Theorem 1.3]{ac2}.  To deal with the
general case, we first base change from $\Spec F$ to $\Spec \bar F$,
which kills both the class of the torsor $\delta(C)$ and the Brauer class
$\beta'$.  Hence Mukai's Theorem \cite[Theorem 2.2]{MR607081} proves that the
Fourier-Mukai functor $\Phi$ gives a derived equivalence
\[
\bar \Phi\co \cD(\bar C\cong\bar E)\xrightarrow{\cong}
\cD(\bar E, (\bar\beta')^{-1}).\]
Then, by the Descent Theorem (Theorem \ref{thm:descent}),
we get a Fourier-Mukai derived equivalence
(again given by the Mukai formula)
\[ \cD(C) \cong \cD(E, (\beta')^{-1}).\]
The inverse appears here since if one takes a $(\beta')^{-1}$-twisted
sheaf\footnote{\label{note:sh}We say ``sheaf'' but, technically, we apply
  this with a class in the derived category of a complex of sheaves.}
on $E$ and pulls it back via $\proj_2$ to $C\times_S E$,
then (left derived) tensoring with
the $\proj_2^*\beta'$-twisted universal sheaf gives an untwisted
sheaf (see footnote \ref{note:sh})  on $C\times_S E$,
to which one can apply $\bfR\proj_{1,*}$
to get an object of $\cD(C)$.
So it remains just to show that $\beta'=\beta$.
  
\subsection{An explicit description of $\Theta$ in Theorem \ref{thm:Skor}}\label{alram1} \cite[\S 3.5]{AlRam} 
  
Let $G$ be a $\Gm$-gerbe on $E$. The base change $\bar{G}$ on $\bar{E}$ is
trivial. Fix an equivalence $f\co \bar{G} \cong \bar{G_0}$ where
$G_0 =\Pic_{E/\Spec F}$ is the trivial $\Gm$-gerbe on $E$. 
Then, for any $\sigma \in \Gamma_F$, the gerbe $\sigma^*\bar{G}$ is equivalent
to $\bar{G}$ as $G$ comes from $E$. Write $f_{\sigma}$ for the resulting
self-equivalence of $\bar{G_0}$:
\[f_{\sigma}\co \bar{G_0} \xrightarrow{\,f^{-1}\,} \bar{G} \cong \sigma^*\bar{G}
\xrightarrow{\sigma^*f} \sigma^* \bar{G_0} = \bar{G_0}.\]
Any self-equivalence of $\bar{G_0}$  \cite[\S 5.1 (b)]{mgerbes} is a
translation by an element of $\bar{G_0}(\bar{F})$, i.e., by a line bundle
$L$ on $\bar{E}$. Namely, the self-equivalence is of the form
$(-) \mapsto (-) + L$. If $L_{\sigma}$ is the line bundle on $\bar{E}$
corresponding to the self-equivalence $f_{\sigma}$ of $\bar{G_0}$, then the
map $\sigma \mapsto L_{\sigma}$ is a cocycle representative for the
element $\Theta(G)$ in $H^1(\Gamma_F, \Pic_{E/\Spec F}(\bar F))$. 

\begin{remark} In order to use the explicit description of \S \ref{alram1} to prove Theorem \ref{main}, we will need to understand that 
\begin{itemize} 
\item for any smooth proper scheme $Y$ over $S=\Spec F$, the Picard scheme $\Pic_{Y/S}$ exists. 
\item universal line bundles on $Y\times_S \Pic_{Y/S}$. 
\item the Picard stack $\mathcal{P}ic_{Y/S}$ of $Y$ is a $\Gm$-gerbe on the Picard scheme $\Pic_{Y/S}$ of $Y$. 
\item  there is a natural bijective correspondence between the trivializations of this gerbe and universal line bundles. 
\end{itemize} 
Since none of the standard references on algebraic stacks contain a detailed treatment of the Picard stack $\mathcal{P}ic_{Y/S}$, we will rely on the the excellent treatment of Picard stacks found in \cite[\S 2]{brochard} for the facts recalled in \S \ref{picstack} below.
\end{remark} 

\subsection{Picard stacks and Picard schemes}\label{picstack}  Fix a smooth and proper scheme $f_Y\co Y \to S$ over $S=\Spec F$. So
the Picard scheme $\Pic_{Y/S}$ of $Y$ exists (\cite[Chapter 8]{MR1045822}
or \cite{MR2223410}).  
  
Let $T$ denote an arbitrary scheme over $S$. As usual, $B\Gm$ denotes
the stack over $S$ whose $T$-points are the Picard category of line
bundles on $T$. Let $\mathcal{P}ic_{Y/S} = \Hom_S(Y, B\mathbb G_m)$
denote the stack over $S$ whose $T$-points comprise the Picard category of
line bundles on $Y \times_S T$.
There is an exact sequence \cite[\S2.3]{brochard} of stacks over $S$
\begin{equation}\label{gerbe} 
  0 \to B\Gm \xrightarrow{f_Y^*} \mathcal{P}ic_{Y/S}
  \xrightarrow{k_Y} \Pic_{Y/S} \to 0,
\end{equation} which says that Picard stack $\mathcal{P}ic_{Y/S}$ of
$Y$ is a $\Gm$-gerbe over the Picard scheme $\Pic_{Y/S}$ of $Y$. 
For any $S$-scheme $T$, the map $f_Y^*$ is the natural pullback map
\[ B\Gm(T) \to \mathcal{P}ic_{Y/S}(T) = \Hom_S(Y, B\mathbb G_m)(T)
= B\Gm( Y\times_S T)\]
which provides the familiar exact sequence \cite[Chapter 8]{MR1045822}
\[ 0 \to \textrm{Pic}(T) \to \textrm{Pic}(Y\times_S T) \to \Pic_{Y/S}(T)\]
by evaluating \eqref{gerbe} on $T$ and taking $\pi_0$. If
$\Pic_{Y/S}(T)$ is not empty,
then we have surjectivity on the right. The sequence can be extended to
\[ 0 \to \textrm{Pic}(T) \to \textrm{Pic}(Y\times_S T) \to
\Pic_{Y/S}(T) \to \Br(T) \to \Br(Y\times_S T).\]

\subsubsection{Extensions and cohomology} The category of Picard stacks over $S$ is an abelian category. Let $G_1$ and $G_2$ be Picard stacks over $S$. Recall that there is a natural homomorphism 
\[\textrm{Ext}^1_S(G_1, G_2) \to H^1(G_1, G_2);\]
given any extension 
\[ 0 \to G_2 \to G \to G_1 \to 0,\]
one views this extension as exhibiting a $G_2$-torsor over $G_1$ and the map above sends the extension to the class of this torsor.  
In our situation, we have a map 
\[ \textrm{Ext}^1_S(\Pic_{Y/S}, B\Gm) \to H^1(\Pic_{Y/S}, B\Gm) = H^2(\Pic_{Y/S}, \Gm).\]
The image of the extension \eqref{gerbe} is the class of the gerbe
$k_Y\co \mathcal{P}ic_{Y/S} \to \Pic_{Y/S}$.  

The
sequence \eqref{gerbe} splits if and only if the gerbe
$\mathcal{P}ic_{Y/S} \to \Pic_{Y/S}$ is trivial. By \cite[Corollaire
  2.3.7]{brochard}, the gerbe $k_Y\co \mathcal{P}ic_{Y/S} \to
\Pic_{Y/S}$ is trivial if and only if there exists a universal bundle
$L$ on  $Y \times_S \Pic_{Y/S}$. 

\subsubsection{Universal line bundles} A line bundle $L$ on $Y \times_S \Pic_{Y/S}$ is universal if it represents the functor $\Pic_{Y/S}$, namely, for every affine map $U \to S$ and every  element $l$ of $\Pic_{Y/S}(U)$, one has $l = k_Y(L \big |_{Y\times_S U})$.

\subsubsection{Sections of $k_Y$ and universal line bundles}\label{sections=universal} (See \cite[Proposition 2.3.4]{brochard}.)
There exists a very natural correspondence between sections $s$ of 
$k_Y\co \mathcal{P}ic_{Y/S} \to \Pic_{Y/S}$ and universal line bundles on
$Y\times_S \Pic_{Y/S}$. Let $\mathcal L_Y$ be the universal line bundle
on $Y\times_S \mathcal{P}ic_{Y/S}$. 

By definition, a section of $k_Y\co \mathcal{P}ic_{Y/S} \to \Pic_{Y/S}$ is a map
\[s\co  \Pic_{Y/S} \to \mathcal{P}ic_{Y/S}\]
such that $k_Y\circ s$ is the identity morphism of $ \Pic_{Y/S}$.
So a section $s$
is a $ \Pic_{Y/S}$-valued point of $\mathcal{P}ic_{Y/S}$, namely, an element
$L$ of $\mathcal{P}ic_{Y/S}( \Pic_{Y/S})$ which maps to the identity
morphism $\textrm{id}_{\Pic_{Y/S}}$ of $\Pic_{Y/S}$, viewed as an element
of $\Pic_{Y/S}(\Pic_{Y/S})$.  Given $s$, consider
$\textrm{id}_Y \times s\co Y \times_S  \Pic_{Y/S} \to Y\times_S \mathcal{P}ic_{Y/S}$; it is easy to check that $(\textrm{id}_Y \times s)^*\mathcal L_Y$ is a universal line bundle $L$ on $Y \times_S \Pic_{Y/S}$.  

It is equally simple to see that, conversely, a universal line bundle gives
a section of $k_Y$. Namely, fix a universal line bundle $L$ on
$Y \times_S \Pic_{Y/S}$. For any affine map $U \to S$ and any element
$x \in \Pic_{Y/S}(U)$, i.e., a morphism $x\co U \to \Pic_{Y/S}$, the
pullback of $L$ along 
\[\textrm{id}_{Y} \times x \co Y \times_S U  \to Y \times_S\Pic_{Y/S}\]
is a line bundle on $Y\times_SU$ and hence an element $\tilde{x}$ of
$\mathcal{P}ic_{Y/S}(U)$ whose image in $\Pic_{Y/S}(U)$ is $x$. As the
assignment $x \mapsto \tilde{x}$ is clearly functorial in $U$, this gives a
section of $k_Y$. 
 
If $\mathcal L$ and $\mathcal L'$ are universal line bundles arising from
sections $s$ and $s'$, then
\[k_Y\co \mathcal{P}ic_{Y/S}( \Pic_{Y/S}) \to  \Pic_{Y/S}(\Pic_{Y/S}),
\quad k_Y(\mathcal L) = k_Y(\mathcal L') = \textrm{id}_{\Pic_{Y/S}}. \]
By the exactness of
\[ 0 \to B\Gm (\Pic_{Y/S}) \xrightarrow{f_Y^*} \mathcal{P}ic_{Y/S}(\Pic_{Y/S})
\xrightarrow{k_Y} \Pic_{Y/S} (\Pic_{Y/S}),\]
$\cL'\otimes \cL^{-1}$ is $f_Y^*L$ for some line bundle $L$ on $\Pic_{Y/S}$. 

\begin{remark} If $Y=A$ is an abelian variety, then $A(S)$ being non-empty
  says that the gerbe $k_A\co \mathcal{P}ic_{A/S} \to \Pic_{A/S}$ is trivial.
  Therefore, there exist universal line bundles on $A\times_S\Pic_{A/S}$; these,
  when restricted to $A\times_S A^t$ (here $A^t$ is the dual abelian variety),
  are the Poincar\'e line bundles. Any two Poincar\'e line bundles on
  $A \times_S A^t$ differ by a line bundle on $A^t$. \qed \end{remark} 
   
\begin{remark}\label{point-splits} Fix $x\in Y(F)$. This is a section to $f_Y\co Y \to S$ and hence the map
\[ B\mathbb G_m \xrightarrow{f_{Y}^*} \mathcal{P}ic_{Y/S}\]
admits a retraction
\[x^*\co \mathcal{P}ic_{Y/S} \to B\mathbb G_m.\]
The map
\begin{equation}\label{splitting} 
  \mathcal{P}ic_{Y/S} \to B\mathbb G_m \times_S \Pic_{Y/S} \quad M
  \mapsto  (x^*M, k_Y(M))
\end{equation} 
is an isomorphism of stacks over ${S}$. 
Thus, the gerbe $k_Y\co \mathcal{P}ic_{Y/S} \to \Pic_{Y/S}$ is trivial and so
universal line bundles exist on 
$Y\times_S \Pic_{Y/S}$. By \cite[Remarque 2.3.5]{brochard}, a
universal line bundle $\mathcal L$ provides a quasi-inverse to \eqref{splitting} if and only if $x^*\mathcal L $ is trivial; given any $\cL$,
one can obtain a quasi-inverse to
\eqref{splitting} by rigidifying $\cL$ along $x$, i.e., replacing $\cL$ by
$\cL\otimes(f_Y^*~x^*\cL)^{-1}$. \qed
\end{remark} 
\subsection{Back to the proof of Theorem \ref{main}}The stack $\cP$ (see Definition \ref{defn-of-p}) is the substack of
$\mathcal{P}ic_{C/S}$
consisting of line bundles of (fiberwise) degree zero.  Restricting
\eqref{gerbe} to $E = \Pic_{C/S}^0$ gives the exact sequence of stacks over $S$
\begin{equation}\label{gerbe0} 
0 \to  B\mathbb G_m \to \cP \to E \to 0
\end{equation} which shows that $\cP$ is a $\Gm$-gerbe over $E$. The
gerbe $\mathcal{P}ic_{C/S} \to \Pic_{C/S}$ is trivial if and only if
the gerbe $\cP \to E$ is trivial. 
By \cite[Corollaire 2.3.7]{brochard}, the gerbe $\cP \to E$ is
trivial if and only if there exists a universal line bundle on $C\times_S E$.

Recall that if a line bundle
$\cL$ on $C \times_S E$ is universal, then, for all $g\in E(\bar{F})$,
the class of the line bundle 
\begin{equation}
\cL\big |_{C \times g} 
\end{equation} on $C \cong C \times g$ is $g \in E(\bar{F})$. Let $\textrm{proj}_E: C \times_S E \to E$ denote the second projection. 
If $\cL$ is universal, then so is  its twist $\mathcal L \otimes \textrm{proj}_E^*L$ by any line bundle $L$ on $E$. 
As we saw earlier, every universal line bundle on $C\times_S E$ arises in this way, i.e., it is a twist of $\mathcal L$ by a line bundle $L$ on $E$. 

\subsubsection{Torsor action} We will need the action of $E$ on $C$
\[ E \times_S C \to C, \quad (g,y) \mapsto g\oplus y,\]
and the related ``subtraction" map
\[C \times_S C \to E, \quad (y,z)\mapsto z\ominus y.\]
satisfying
\[z\ominus y = g \iff g\oplus y =z.\]
The isomorphism \cite[Example 5.5]{conrad-av}
\begin{equation}\label{autodual}  
  E \to E^t, \quad g\mapsto \cO_E(-g)\otimes  \cO_E(e)^{-1}
\end{equation}
expresses the self-duality of $E$. 

\subsubsection{Cocycle representative}
For any $z\in C(\bar{F})$, a cocycle representative for
$\delta(C)\in H^1(\Gamma_F,\linebreak[1]
E(\bar{F}))$ is given by the assignment 
\begin{equation}\label{cocycle}
  \Gamma_F \to E(\bar{F}),\quad  \sigma \mapsto
  \sigma(z)\ominus z \in E(\bar{F}).
\end{equation}

\subsection{First method}\label{firstproof} 
We will compute $\Theta(\beta')$ using the explicit description in
\cite[\S 3.5]{AlRam}. We fix a point $x\in C(\bar{F})$.

As $E(F)$ is non-empty, the stack $\mathcal{P}ic_{E/S}$,viewed as a $\Gm$-gerbe over $\Pic_{E/S}$, is trivial. So there is a universal bundle on $\Pic_{E/S} \times_S E$. The substack $\cQ$ of $\mathcal{P}ic_{E/S}$ consisting of line bundles of degree zero; so $\cQ$ is a $\Gm$-gerbe over $\Pic_{E/S}^0 =E$. The universal (Poincar\'e) bundle \eqref{poincare} on
$E \times_S E$ provides a section of the exact sequence (obtained from (\ref{gerbe})) of stacks
\begin{equation}\label{poincare0} 0 \to B\mathbb G_m \to \cQ \to E \to 0.
\end{equation}

Let $\bar{\cQ}$ and $\bar{\cP}$ denote the base changes of $\cQ,\,\cP$ to
$\bar{E}$. The choice of an object $M_0$ in $\bar{\cP}(\bar{E})$ (namely, a
section to the morphism $\bar{\cP} \to \bar{E}$) in turn provides an
equivalence \cite[\S 5, p.\ 32]{mgerbes}  
\[f\co \bar{\cP} \to \bar{\cQ}, \quad M \mapsto \Hom_{\bar{\cP}}(M_0, M).\]
Note that composition of morphisms in $\bar{\cP}$ 
 \[\Hom_{\bar{\cP}}(K, L) \times \Hom_{\bar{\cP}}(L, M) \to \Hom_{\bar{\cP}}(K, M)\]
corresponds to the tensor product of objects (monoidal structure) in
$\bar{\cQ}$.  

For any $\sigma \in \Gamma_F$, one has $\sigma^*\bar{\cP} \cong
\bar{\cP}$ and $\sigma^* \bar{\cQ}\cong \bar{\cQ}$ as both gerbes
$\cP$ and $\cQ$ are defined over $E$. With this identification, one
has  
\[ \sigma^*f\co \bar{\cP} \to \bar{\cQ}, \quad M \mapsto
\Hom_{\bar{\cP}}(\sigma^*M_0, M).\]
It is well known  \cite[\S 5.1 (b)]{mgerbes} that the self-equivalence
$f_{\sigma}$ of $\bar{\cQ}$ 
\[
f_{\sigma} \co \bar{\cQ} \xrightarrow{f^{-1}} \bar{\cP}
\cong \sigma^*\bar{\cP} \xrightarrow{\sigma^*f} \sigma^*{\bar{\cQ}}
\cong \bar{\cQ}
\]
is a translation by an element $L_{\sigma}$ of $\bar{\cQ}$.  Our
proof will proceed by an explicit calculation of $L_{\sigma}$ below. 
 
 For any $y\in C(\bar{F})$, let $\rho_y\co \bar{C} \to \bar{E}$ be the
 isomorphism given by $z \mapsto  y\ominus z$. The pullback of the
 Poincar\'e bundle \cite{conrad-av} on $E \times_S E$: 
\begin{equation}
\label{poincare}
L=  \cO_{E\times_S E}(\Delta_E - e \times E - E \times e)
\end{equation} 
by the map $\rho_y \times id_{\bar{E}}$ gives a line bundle
\begin{equation} M_y:= \mathcal O_{\bar{C} \times_{\bar S} \bar{E}}(\Gamma_{\rho_y} - y \times \bar{E} - \bar{C} \times e).\end{equation}  
Just as $L$ provides a section of \eqref{poincare0}, $M_y$ provides a
section of \eqref{gerbe0}. The well known properties of $L$ 
\begin{equation}
  \begin{split} L\big |_{g \times E} = \cO_{g\times E}((g,g) - (g,e))
    \cong \cO_E([g] -[e]), & \quad g \neq e \\
L\big |_{E \times h} = \cO_{E \times h}((h,h) - (e,h)) \cong \cO_E([h] -[e])  & \quad h \neq e\\
L\big |_{E \times e} \cong \cO_E \cong L \big |_{e \times E}& 
\end{split}
\end{equation} 
translate into properties of $M_y$ which we write explicitly for $M_x$
\begin{equation}
\begin{split} M_x\big |_{x\oplus g \times \bar{E}}& = \cO_{x\oplus g\times \bar{E}}((x\oplus g,g) - (y\oplus g,e)) \cong \cO_{\bar{E}}([g] -[e]), \quad g \neq e \\
M_x\big |_{\bar{C} \times h} & = \cO_{\bar{C} \times h}((x\oplus h,h) - (x,h)) \cong \mathcal O_{\bar{C}}([x\oplus h] -[x])  \quad h \neq e\\
& M_x\big |_{\bar{C} \times e} \cong \cO_{\bar{C}}, \quad \quad M_x \big |_{x \times \bar{E}} \cong \cO_{\bar{E}}.
 \end{split}
\end{equation} 
For any $y$, the line bundle $M_y$ on $\bar{C} \times_{\bar S} \bar{E}$ gives a section
(also denoted $M_y$) \cite{brochard} to the morphism $\cP \to E$. Note
that the line bundle $M_y$ gives a family of line bundles on $C$
parametrized by $E$ together with a trivialization at $y \in \bar{C}$.

Observe that $\sigma^*M_x$ is $M_{\sigma^{-1} x}$. We claim the following
\[M_{\sigma^{-1}x} \otimes M_x^{-1} \cong \proj_{\bar{E}}^* L_{\sigma},
\quad \proj_{\bar{E}}\co \bar{C} \times_{\bar S} \bar{E} \to \bar{E},\]
i.e.,  the line bundle $M_{\sigma^{-1}x} \otimes M_x^{-1}$   is the
pullback of a line bundle $L_{\sigma}$ on $\bar{E}$.   This can be
proved as follows: The line bundle $M_x$ represents a section of
\eqref{gerbe0}; any two sections of \eqref{gerbe0} differ by a section
of $B\mathbb G_m$. So the two objects $M_x$ and $M_{\sigma^{-1}x}$ of
$\cP(E)$ differ by an object of $B\mathbb G_m(E)$, i.e., a line bundle
on $E$. This is the required $L_{\sigma}$.  
 
As 
\[ \proj_{\bar{E}}^* L_{\sigma}\big |_{y \times \bar{E}}  =
L_{\sigma},\] we can explicitly determine $L_{\sigma}$ by restricting
$M_{\sigma^{-1}x} \otimes M_x^{-1} $ to $y \times \bar{E}$. 
Fix $y\in C(\bar{F})$ with $y \neq x, \sigma^{-1}x$.  If $y = x\oplus g$,
then $y = \sigma^{-1}x \oplus g \oplus g_0$, where
$x = \sigma^{-1} x \oplus g_0$.  
Then $M_x$ restricted to $y\times \bar{E}$ is
$\cO_{\bar{E}}([g] -[e])$ and $M_{\sigma^{-1}x}$ restricted to $y\times \bar{E}$
is  $\cO_{\bar{E}}([g+g_0] -[e])$. 
Therefore $M_{\sigma^{-1}x} \otimes M_x^{-1}$ restricted to
$y\times \bar{E}$ is $\cO_{\bar{E}}([g+g_0] -[e] +[e] - [g])
= \cO_{\bar{E}}([g+g_0] -[g])$. 
Therefore, $L_{\sigma} = \cO_{\bar{E}}([g+g_0] -[g])$, whose
class in $E(\bar{F})$ is $g_0 = x \ominus \sigma^{-1}x$.  

If $x = \sigma(z)$, then $g_0 = \sigma(z) \ominus z$. By
\eqref{cocycle}, the assignment $\sigma \mapsto L_{\sigma}$
is a representative for the cocycle $\delta(C)$.
So $\Theta(\beta')=\delta(C)$, as required.
\end{proof}

\subsection{Second method}\label{secondproof}
This proof of Theorem \ref{main} uses the method of
``integration of a gerbe along the fibres" of \cite[\S4.3]{AlRam}. 

\subsubsection{Gerbes and their pushforwards} (See\cite[\S 3]{AlRam}.)
Fix a morphism $f\co V \to W$ of schemes over $S$ and let $A$ be an
\'etale abelian sheaf on $V$. If $\cG$ is an $A$-gerbe on $V$, its
pushforward $f_*\cG$ is a stack but not a gerbe in general. The Leray
spectral sequence for $f$ gives a map
\[H^2(V, A) \to H^0(W, R^2f_*A);\]
let $E^2_1$ be its kernel. 
A gerbe $\cG$ is called ``horizontal" \cite[\S4.3]{AlRam} if its class
$[\cG]\in H^2(V, A)$ lies in the subgroup $E^2_1$. If $\cG$ is horizontal,
then $\pi_0(f_*\cG)$ is a torsor over the \'etale sheaf $R^1f_*A$ on $W$.
This provides a map
\[E^2_1 \to H^1(W, R^1f_*A).\]
This is the map $\Theta$ in the case $\pi\co E \to S$ and $A =\Gm$.
In this case, $R^1\pi_*\Gm = \Pic_{E/S}$. Since
$H^1(S, \Pic_{E/S}) = H^1(S, E^t) = H^1(S, E)$, any horizontal
$\Gm$-gerbe on $E$ gives rise to an $E$-torsor on $S$. We can now present
the second proof of Theorem \ref{main}. 

\begin{proof} 
Let $\beta' \in \Br(E)$ be the class of the $\Gm$-gerbe $\cP \to E$.  In our
situation, a gerbe is horizontal if it becomes trivial on $E \times_S S'$
with $S' = \Spec F'$ for a finite Galois extension $F'$ of $F$.  The gerbe
$\cP$ is horizontal  (by Tsen's theorem) as it becomes trivial for any
extension $F'$ with $C(F')$ not empty. 
So we have to show that $\Theta(\beta') = \delta(C)$. 
 
As explained in \cite[\S3]{AlRam} and above, $\Theta(\beta')$ is the
class of  $E$-torsor $X=\pi_0(\pi_*(\cP))$.  So we just need to show
that $\delta(X) = \delta(C)$. 

Recall that for any \'etale sheaf $\cF$ on $E$, the \'etale sheaf
$\pi_*\cF$ on $S$ is defined as
\[ S'\mapsto H^0(E\times_S S', \cF) = \cF(E\times_SS')\]
for all finite \'etale schemes $S'$ over $S$. The sheaf $\pi_*\cF$ is
determined by the $\Gamma_F$-module $\cF(\bar{E})$.  

In our case, the pushforward $\pi_*(\cP)$ of the gerbe $k_C\co \cP \to E$
is the stack on $S$ given by
\[S' \mapsto H^0(E\times_S S', \cP).\]
Here $H^0(E\times_S S', \cP)$ is the groupoid of sections of the
gerbe $k_Y\co \cP \to E$ over $E\times_S S'$. Applying $\pi_0$ (the functor
of connected components) to it gives us the set of isomorphism classes of
sections of the gerbe over $E\times_S S'$. Therefore, the
$E$-torsor $X=\pi_0(\pi_*(\cP))$ is the set of isomorphism classes of
sections over $E(\bar{F})$. 
By \S \ref{sections=universal}, this is the same as isomorphism classes of
universal line bundles on $\bar{Y} \times_{\bar{S}}\bar{E}$. The action of
$E(\bar{F})$ is the same as in 
\S \ref{sections=universal}. From here, the argument is the same as in the
first proof presented in \S \ref{firstproof}. \end{proof} 

\section{Torsors for abelian varieties}  
In this section we discuss the generalization of
Theorem \ref{main} to higher dimensions. Again we work with
smooth proper schemes over $\Spec F$, $F$ a perfect field.

Let $Y$ be a torsor for an abelian variety $A$ over $S=\Spec F$. 
Let $\delta(Y)$ be its class in $H^1\et(S, A)$. For $X = Y$ or $X=A$, 
let ${\cP}ic_{X/S} = \Hom_S(X, B\Gm)$ (as before) denote the stack
over $S$ whose $T$-points are the Picard category of line bundles on
$X \times_S T$.  The map $k_X\co {\cP}ic_{X/S} \to \Pic_{X/S}$ is a $\Gm$-gerbe.

Let $\cP_X$ denote the substack of ${\cP}ic_{X/S}$ corresponding to
line bundles algebraically equivalent to zero. This is a $\Gm$-gerbe
over the abelian variety $\Pic^0_{X/S}$. Since $\Pic_{A/S}^0 = A^t$ and
$\Pic^0_{Y/S}$ is canonically isomorphic to $\Pic^0_{A/S}$,
we have two gerbes $k_A: \cP_A \to A^t$ and $k_Y: \cP_Y \to A^t$ over $A^t$. 
 While $\cP_A$ is a trivial gerbe, the gerbe $\cP_Y$ is not. As $Y(\bar{F})$ is non-empty, the gerbe $\cP_Y$ becomes trivial after base change to $\bar{F}$; see Remark \ref{point-splits}. Namely, the
class $\beta_Y\in \Br(A^t)$ of $\cP_Y$ actually lies in
the kernel $\Br_1(A^t)$ of the map $\Br(A^t) \to \Br(\bar{A^t})$.

We proceed to the proofs of Proposition \ref{prop:BrtoH1}
and Theorem \ref{main2} as stated in the introduction.

\begin{proof}[Proof of Proposition \ref{prop:BrtoH1}]
This follows from \cite[Proposition 4.3.2]{MR4304038}.

The proof of this is just like that of Theorem \ref{thm:Skor}, with
one major exception, namely that $E_2^{0,2}$ in the Leray spectral
sequence for $\pi\co A^t\to S$ may not vanish.  But $E_\infty^{0,2}$
turns out to be identifiable with the quotient
${\Br(A^t)}/{\Br_1(A^t)}$, so as long as we are only interested in
${\Br_1(A^t)}$, this is not an issue. We
still have vanishing of all differentials landing on the bottom row,
because of the splitting defined by $e_*\co S\to A^t$, and so
\[
E_2^{1,1}=H^1(\Gamma_F, \Pic_{A^{t}/S}(\bar F))
= H^1(\Gamma_F, NS(A^t)\times A(\bar F)) = H^1\et(S, A),\]
since the N\'eron-Severi group of an abelian variety is torsion-free,
and thus $H^1(\Gamma_F,NS(A^t))=0$, and $A$ is the dual of $A^t$.  
\end{proof}


\begin{proof}[Proof of Theorem \ref{main2}]
Recall that, for any fixed $z \in Y(\bar{F})$, the assignment 
\begin{equation}\label{Y-cocycle} 
\Gamma_F \to A(\bar{F}),\quad \sigma \mapsto \sigma(z)\ominus z
\end{equation}is a cocycle representative for $\delta(Y) \in H^1(\Gamma_F, A(\bar{F}))$. 

We follow the method of proof of Theorem \ref{main}. 

The exact sequence, for each scheme $T$ finite and \'etale over $S$, 
\[ 0 \to B\Gm(A^t\times_S T) \to \cP_Y(A^t \times_S T) \to A^t(A^t\times_S T)\]
gives an exact sequence of stacks on $S$ in the \'etale topology. 
In particular, we obtain an exact sequence of $\Gamma_F$-modules 
\[ 0 \to \textrm{Pic}(A^t\times_S \bar{S}) \to \cP_Y(A^t\times_S\bar{S}) \to A^t(A^t\times_S\bar{S}) \to 0\] 
as the gerbe splits over $\bar{S}$. Taking $\pi_0$ of the stacks
gives an exact sequence of $\Gamma_F$-modules.
The boundary map becomes 
\[ \textrm{Mor}_S(A^t, A^t) \to H^1(\Gamma_F, \textrm{Pic}(\bar{A}^t)).\]
The image of $\textrm{id}_{A^t}$ is a cocycle which measures
the obstruction to the exact sequence 
\[ 0 \to B\Gm \to \cP_Y \to A^t \to 0\]
of $S$-stacks splitting over $S$. Given any lift $s$ of $\textrm{id}_{A^t}$
over $\bar{S}$, the assignment 
\[\Gamma_F \to \textrm{Pic}(\bar{A}^t),
\quad \sigma \mapsto \sigma(s).s^{-1}\]
is a representative for this cocycle which records the (failure of)
$\Gamma_F$-equivariance of $s$. 

Let $L$ be the Poincar\'e bundle \cite[Theorem 4.16]{conrad-av} on
$A\times_S A^t$;  recall its properties:  
\begin{itemize} 
\item $L$ is trivialized along $A \times e$ and $e\times A^t$ such that the
  two trivializations coincide on the fiber $L \big|_{(e,e)}$:
\[ L\big |_{A\times e} \cong \cO_A,\quad L \big |_{e \times A^t} \cong \cO_{A^t}.\] 
\item for any $h \in A^t(\bar{F})$, the line bundle $ L\big |_{A
  \times h}$ has class $h \in \Pic^0(A) = A^t$.  
\item for any $g\in A(\bar{F})$, the line bundle $L\big |_{g\times
  A^t}$ has class $g \in  \Pic^0(A^t) = A$.  
\end{itemize} These properties characterize $L$ uniquely. 
 

For any $y \in Y(\bar{F})$, let $M_y$ be the pullback of the
Poincar\'e bundle $L$ on $\bar{A} \times_{\bar S} \bar{A}^t$ via the map 
\[\rho_y: \bar{Y} \times_{\bar S} \bar{A}^t \to \bar{A} \times_{\bar S} \bar{A}^t, \quad (z, h) \mapsto (z\ominus y, h).\]
The bundle $M_y$ provides a splitting of 
\[ 0 \to B\Gm \to \cP_Y \to A^t \to 0\]
over $\bar{F}$. One has
\[ M_y \big |_{y\times \bar{A}^t} \cong \mathcal O_{\bar{A}^t}.\]
 
Fix $x\in Y(\bar{F})$. One has $M_x$ and as before, for any $\sigma\in
\Gamma_F$, the identity $\sigma^*M_x = M_{\sigma^{-1}x}$ holds.  
Therefore, $\sigma^*M_x \otimes M^{-1}_x$ is $f_Y^* L_{\sigma}$ for
some $L_{\sigma}$ on $\bar{A}^t$; the class of $L_{\sigma}$ is an
element of $\Pic^0(\bar{A}^t) = \bar{A}$.
Consider the restrictions of $M_x$ and $\sigma^*M_x$ to $x \times \bar{A}^t$:
  \[ M_x \big |_{x \times \bar{A}^t} \cong \mathcal O_{\bar{A}^t}, \quad \sigma^*M_x  \big |_{x \times \bar{A}^t} \cong L \big |_{(\sigma^{-1}x \ominus x) \times \bar{A}^t}.\]
Therefore, $L_{\sigma}$ is isomorphic to the line bundle
  \[ L \big |_{(\sigma^{-1}x \ominus x) \times \bar{A}^t}\]
on $\bar{A}^t$. This says that the class of  $L_{\sigma}$ in
$\Pic_{A^t/S}(\bar{F})$ is given by 
the element $\sigma^{-1}x \ominus x$ in  $A(\bar{F}) =
\Pic^0_{A^t/S}(\bar{F})$ using $\Pic^0_{A^t} = (A^t)^t = A$.  
 
 If $x = \sigma(z)$, then $g_0 = \sigma(z) \ominus z$. By
\eqref{Y-cocycle}, the assignment $\sigma \mapsto L_{\sigma}$
is a representative for the cocycle $\delta(C)$.
So $\Theta(\beta_Y)=\delta(Y)$, as required.
\end{proof}




\section{Curves of other genera}
\label{sec:OtherGenera}

To put our results in context, it is useful to compare them with some
results for other varieties and with the situation for curves of genus
$g\ne 1$.  Recall that by a result of Bondal and Orlov
\cite{MR1818984}, one cannot
expect Fourier-Mukai partners for projective varieties with
ample canonical or anticanonical bundle, which explains why we have
focused on the genus $1$ situation.  Nevertheless, there is an
analogue of our main theorem in the context of curves $C$ of genus zero,
in that the curve $C$ canonically determines a Brauer group class $\alpha$
(in this case of order $2$), and the ``nontrivial part'' of the
derived category $\cD(C)$ can be identified with the derived category of
$\alpha$-twisted sheaves.

\begin{theorem}
\label{thm:genuszero}
Let $C$ be a smooth geometrically connected projective curve of genus $0$
over perfect field $F$. Then $C$ is a quadric in $\bbP^2$ and is the
Severi-Brauer variety of a quaternion algebra $A$ over $F$.
If $\alpha$ is the class of $A$ in $\Br F$, then $\cD(C)$ has a
semi-orthogonal decomposition into $\cD(F)$ and $\cD(A)\cong \cD(F,\alpha)$.
\end{theorem}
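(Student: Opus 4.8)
The plan is to exhibit the two blocks of the decomposition concretely and then check semi-orthogonality and fullness by faithfully flat descent along $\bF/F$, where $C$ becomes $\bbP^1$ and the whole statement collapses to Beilinson's decomposition $\cD(\bbP^1)=\langle\cO,\cO(1)\rangle$. First I would record the geometric input. Since $g(C)=0$, Riemann--Roch gives $\deg\omega_C^{-1}=2$, $h^0(\omega_C^{-1})=3$ and $h^1(\omega_C^{-1})=0$, and the complete linear system $|\omega_C^{-1}|$ is a closed $F$-embedding of $C$ as a smooth conic in $\bbP^2=\bbP\bigl(H^0(\omega_C^{-1})^\vee\bigr)$. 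A smooth conic over $F$ is the Severi--Brauer variety of a quaternion algebra $A$, with $C(F)\neq\varnothing$ exactly when $A$ splits, i.e.\ when $C\cong\bbP^1$; its class $\alpha=[A]\in\Br(F)$ is $2$-torsion, and because $A$ carries its standard symplectic involution one has $A\cong A^{\op}$ and $\alpha=\alpha^{-1}$, so the ($A$ versus $A^{\op}$) bookkeeping below is immaterial.

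For the two blocks I would take $\cO_C$ and a rank-$2$ bundle $\cJ$. The sheaf $\cO_C$ is exceptional: $\operatorname{End}_C(\cO_C)=F$ (as $C$ is geometrically connected) and $\Ext^{>0}_C(\cO_C,\cO_C)=H^{>0}(C,\cO_C)=0$ since $g=0$. For $\cJ$ I would use the canonical tautological bundle of the Severi--Brauer variety $C=\mathrm{SB}(A)$ (Quillen, Artin): a vector bundle with $\cJ_{\bF}\cong\cO(-1)^{\oplus2}$ and $\operatorname{End}_C(\cJ)\cong A^{\op}$. Tensoring by the $F$-rational line bundle $\omega_C^{-1}$ (which restricts to $\cO(2)$) one may normalize $\cJ$ so that $\cJ_{\bF}\cong\cO(1)^{\oplus2}$, without changing $\operatorname{End}_C(\cJ)$. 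Put $T=\cO_C\oplus\cJ$.

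I would then show $T$ is a tilting bundle and compute its endomorphism algebra. As $\bF/F$ is flat and $\cO_C,\cJ$ are vector bundles, $\Ext^i_C(-,-)\otimes_F\bF\cong\Ext^i_{\bbP^1}\bigl((-)_{\bF},(-)_{\bF}\bigr)$; over $\bbP^1$ one reads off from $T_{\bF}=\cO\oplus\cO(1)^{\oplus2}$ that $\Ext^{>0}(T_{\bF},T_{\bF})=0$ and $\Ext^\bullet_{\bbP^1}(\cJ_{\bF},\cO)=H^\bullet(\bbP^1,\cO(-1)^{\oplus2})=0$, so by faithful flatness $\Ext^{>0}_C(T,T)=0$ and $\Ext^\bullet_C(\cJ,\cO_C)=0$ (in particular $\Hom_C(\cJ,\cO_C)=0$). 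Generation descends as well: if $\bfR\Hom_C(T,E)=0$ for $E\in\cD(C)$ then $\bfR\Hom_{C_{\bF}}(T_{\bF},E_{\bF})=0$, hence $E_{\bF}=0$ because $T_{\bF}$ generates $\cD(\bbP^1)$, and therefore $E=0$ by conservativity of the faithfully flat base change. Thus $T$ is a tilting bundle and $\bfR\Hom_C(T,-)$ is an equivalence $\cD(C)\xrightarrow{\sim}\cD(\Lambda)$ for $\Lambda=\operatorname{End}_C(T)$. By the vanishing $\Hom_C(\cJ,\cO_C)=0$ the algebra $\Lambda$ is triangular, with diagonal blocks $\operatorname{End}_C(\cO_C)=F$ and $\operatorname{End}_C(\cJ)\cong A$ (using $A\cong A^{\op}$) and off-diagonal bimodule $\Hom_C(\cO_C,\cJ)=H^0(C,\cJ)$. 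The two diagonal idempotents of a triangular matrix algebra give a semi-orthogonal decomposition $\cD(\Lambda)=\langle\cD(F),\cD(A)\rangle$, i.e.\ $\cD(C)=\langle\cO_C,\cJ\rangle$ with $\langle\cO_C\rangle\cong\cD(F)$ and $\langle\cJ\rangle\cong\cD(A)$; the identification $\cD(A)\cong\cD(F,\alpha)$ is exactly the Azumaya-algebra description of twisted sheaves recalled in the Notations subsection (see \cite[Theorem 1.3.7]{ac-thesis} and \cite[4.3]{ac1}).

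The hard part, and the real content, is the construction of $\cJ$ over $F$: producing a canonical rank-$2$ bundle whose endomorphism algebra is $A$ (equivalently, realizing the Brauer class $\alpha$ geometrically) is precisely where the link between the conic and the quaternion algebra is used, and it relies on the Quillen--Artin theory of Severi--Brauer varieties. The remaining descent steps (Ext-vanishing, triangularity, and generation) are routine once one knows that $\Ext$ commutes with the flat base change $\bF/F$ and that $\bF/F$ is faithfully flat, hence conservative.
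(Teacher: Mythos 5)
Your proposal is correct, and it diverges from the paper exactly at the point where the paper stops working: the paper's proof consists of (i) noting $C$ is a plane conic (citing the Stacks Project rather than your Riemann--Roch/anticanonical argument, which amounts to the same thing), (ii) identifying $C$ with the Severi--Brauer variety of a quaternion algebra $A$, and then (iii) invoking Bernardara's theorem \cite{MR2571702} on semiorthogonal decompositions of Severi--Brauer schemes as a black box. You instead re-prove the relevant special case of that citation from scratch: you build the tilting bundle $T=\cO_C\oplus\cJ$ from Quillen's tautological bundle, check $\Ext$-vanishing, semiorthogonality $\Ext^\bullet_C(\cJ,\cO_C)=0$, and generation by faithfully flat descent to $\bar F$ (where everything reduces to Beilinson's $\cD(\bbP^1)=\langle\cO,\cO(1)\rangle$), and then read off the decomposition from the triangular shape of $\operatorname{End}_C(T)$, with $\operatorname{End}_C(\cJ)\cong A^{\op}\cong A$ via the standard involution. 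What the paper's route buys is brevity and generality --- Bernardara's result covers Brauer--Severi schemes of arbitrary relative dimension over a general base, with blocks $\cD(S,A^{\otimes j})$ --- while your route buys transparency: it makes explicit where the quaternion algebra enters (as the endomorphism algebra of a canonical rank-$2$ bundle), keeps the whole argument at the level of classical tilting theory plus flat base change, and connects directly to Quillen's $K$-theoretic computation that the paper itself mentions in the remark following the theorem. The details you give (the normalization $\cJ_{\bar F}\cong\cO(1)^{\oplus 2}$, the $\Ext$ computations on $\bbP^1$, conservativity of base change for generation, and the identification $\cD(A)\cong\cD(F,\alpha)$ from the paper's Notations) are all sound; the only conventions to watch --- $A$ versus $A^{\op}$ and the ordering of the two blocks --- are immaterial here, as you note, since $\alpha$ is $2$-torsion and the theorem does not specify an order.
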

\begin{proof}
That $C$ is a plane quadric is a standard fact \cite[Lemma
  53.10.3]{Stacks}. But a homogeneous quadric in three variables
determines a quaternion algebra, and so $C$ is the
Severi-Brauer variety of a quaternion algebra $A$ over $F$. Now the
result immediately follows from \cite{MR2571702}.
\end{proof}
\begin{remark}
Note that this result about derived categories is a natural
refinement of Quillen's calculation of the $K$-theory of
Severi-Brauer varieties in \cite[\S8]{MR0338129}, which in this
context gives $\bbK(C)\cong \bbK(F) \oplus \bbK(F,\alpha)$,
where $\bbK$ denotes the $K$-theory spectrum.
\end{remark}
\begin{remark}
  Our results for curves may be viewed as analogues of results for K3
  surfaces in \cite{MR2310257},
  \cite{MR3616008}, \cite{MR3616005}, and \cite{MR4184293}.
  Question 2 at the beginning of \cite{MR3616008},
  answered for example in \cite{MR3616008},  asks if existence
  of an $F$-rational point on a K3 surface is preserved under a
  twisted derived equivalence $\cD(X,\alpha)\cong \cD(Y,\beta)$.
  We have seen that for genus-$1$ curves, the answer is an emphatic ``no.'' 
\end{remark}

Now that we've discussed curves of genus $0$ and genus $1$, it is natural
to ask about an analogue of Fourier-Mukai duality for curves of genus
$\ge 2$.  A natural program for doing this is first to see what happens
over an algebraically closed field, and then to hope that for a general
curve $C$ defined over $F$, one gets a full and faithful Fourier-Mukai
functor from $\cD(C)$ to $\cD(X,\beta)$, for suitable $X$ and
$\beta\in \Br(X\text{ rel }F)$.  One's first guess might be to
take $X$ to be the Jacobian of $C$, since one has a Poincar\'e line
bundle over $C\times X$.  Then perhaps one could use Theorem \ref{main2}
when $C$ does not have a rational point.

Unfortunately, this program runs into trouble right from the start.
If $C$ is a smooth geometrically connected projective curve of genus $\ge 2$
over an algebraically closed field $F$, then one can use the
Poincar\'e line bundle over $C\times X$ to define a Fourier-Mukai
functor $\Phi_C\co \cD(C)\to \cD(X)$, $X$ the Jacobian of $C$,
but $\Phi_C$ is \emph{not} full and faithful.  To see this one can use
the criteria in either Theorem \ref{thm:equivcrit}
or \cite[Theorem 1.1]{BondalOrlovSemiOrthog};
the second conditions of
those theorems fail, since (up to a degree shift)
$\Phi_C\left(\cF_{x_0}\right)=\cO_X$ for $\cF_{x_0}$ the skyscraper sheaf at
the basepoint $x_0$ in $C$ used to define the Jacobian embedding, and
$\Hom^{\dim X}_X(\cO_X, \cO_X)=F\ne 0$, where $\dim X>\dim C$.
It is thus necessary to abandon hope of using abelian varieties and
torsors over them to get an analogue of Theorem \ref{main}.

However, there is another variety $X$ that seems to work, at least in
the algebraically closed case \cite{MR3713871,MR3880395,MR3764066},
namely the moduli space of stable rank-$2$ vector bundles over $C$
with determinant isomorphic to a fixed line bundle of odd degree.
One then needs to replace the Poincar\'e line bundle by the universal
rank-$2$ bundle over $C\times X$.  We leave it as an interesting
problem for the future to determine in the general case
whether there is a suitable
Brauer twist of $X$ for which one gets a full and faithful
functor from $\cD(C)$ to $\cD(X,\beta)$.  Over an algebraically
closed field of characteristic $0$ at least, $X$ has trivial Brauer
group --- see \cite{MR2353678}, so maybe the Brauer twist is unnecessary.

\bibliography{Derived-genus-one}
\bibliographystyle{plain}

\end{document}